\DeclareMathAlphabet{\mathpzc}{OT1}{pzc}{m}{it}
\definecolor{Mycolor2}{HTML}{e85d04}
\newcommand{\hyp}[5]{\,\mbox{}_{#1}F_{#2}\!\left(
  \genfrac{}{}{0pt}{}{#3}{#4};#5\right)}
\newcommand{\KdF}[5]{
F_{{#1}}^{{#2}}\!\left(
  \genfrac{}{}{0pt}{}{{#4}}{{#3}};#5\right)}
\def\cprime{$'$}
\newtheorem{thm}{Theorem}[section]
\newtheorem{cor}[thm]{Corollary}
\newtheorem{rem}[thm]{Remark}
\def\eqnarray{\stepcounter{equation}\let\@currentlabel=\theequation
\global\@eqnswtrue
\tabskip\@centering\let\\=\@eqncr
$$\halign to \displaywidth\bgroup\hfil\global\@eqcnt\z@
  $\displaystyle\tabskip\z@{##}$&\global\@eqcnt\@ne
  \hfil$\displaystyle{{}##{}}$\hfil
  &\global\@eqcnt\tw@ $\displaystyle{##}$\hfil
  \tabskip\@centering&\llap{##}\tabskip\z@\cr}
\def\endeqnarray{\@@eqncr\egroup
      \global\advance\c@equation\m@ne$$\global\@ignoretrue}
\def\@yeqncr{\@ifnextchar [{\@xeqncr}{\@xeqncr[5pt]}}
\newcommand{\dd}{{\mathrm d}}
\newcommand{\expe}{{\mathrm e}}
\newcommand{\ba}{{\hspace{-0.02cm}\boldsymbol \alpha}}
\newcommand{\bm}{{\bf m}}
\newcommand{\N}{{\mathbb N}}
\newcommand{\Z}{\mathbb{Z}} 
\newcommand{\R}{\mathbb{R}} 
\newcommand{\C}{\mathbb{C}} 
\newcommand{\CC}{\mathbb{C}} 
\newcommand{\No}{\mathbb{N}_0} 
\newcommand{\pFq}[5]{  {{}_{#1}F_{#2}}\left(\! \genfrac..{0pt}{}{#3}{#4};#5\!\right)}
\newcommand{\topt}[2]{\left\{\substack{{#1}\\{#2}}\right\}}
\newcommand{\topr}[4]{\left\{\substack{{#1}\\{#2}\\{#3}\\{#4}}\right\}}
\newcommand{\topq}[5]{\left\{\substack{{#1}\\{#2}\\{#3}\\{#4}\\{#5}}\right\}}
\newcommand{\topss}[3]{\left\{\substack{{#1}\\[-0.12cm]{#2}\\{#3}}\right\}}
\begin{document}

\renewcommand{\PaperNumber}{***}

\FirstPageHeading

\ShortArticleName{Contiguous relations for linearization coefficients of orthogonal polynomials}

\ArticleName{Two-dimensional contiguous relations for linearization coefficients of orthogonal polynomials in the Askey-scheme}

\Author{Howard S.~Cohl\,$^{\ast}\footnote{Working remotely in Mission Viejo, California}\!\!\ $
and Lisa Ritter\,$^\ast\!\!$}

\AuthorNameForHeading{H.~S.~Cohl, 
L.~Ritter}

\Address{$^\ast$~Applied and Computational Mathematics Division,
National Institute of Standards and Technology,
Gaithersburg, MD 20899-8910, USA
} 
\EmailD{howard.cohl@nist.gov, lisa.ritter@nist.gov}
\URLaddressD{
\href{http://www.nist.gov/itl/math/msg/howard-s-cohl.cfm}
{http://www.nist.gov/itl/math/msg/howard-s-cohl.cfm}
}





\ArticleDates{Received ???, in final form ????; Published online ????}

\Abstract{We produce two-dimensional
contiguous relations for generalized
hypergeometric functions
by starting with linearization
coefficients for some continuous generalized hypergeometric orthogonal 
polynomials in the Askey-scheme.
}


\section{Introduction}

In this paper we produce two-dimensional contiguous relations for linearization coefficients of hypergeometric orthogonal polynomials in the Askey-scheme. These contiguous relations are derived by using integrals of products of these orthogonal polynomials. These integrals are related to linearization coefficients for the polynomials. The idea for the two-dimensional contiguous relations goes back to a
paper by Ismail, Kasraoui \& Zeng (2013) \cite{IsmailKasraouiZeng2013}.
In this paper we re-derive the general expression for the
two-dimensional contiguous relations and then apply this relation to several specific examples, namely for the linearization of a product of two and three Gegenbauer polynomials, a product of two and three Hermite polynomials, linearization of two Jacobi polynomials and for a product of two unscaled and two scaled Laguerre polynomials.

\section{Preliminaries}
We adopt the following list conventions as follows. 
Within a list of items, we define
\[
a+\topss{x_1}{\vdots}{x_n}:=\{a+x_1,\ldots,a+x_n\},
\]
and when $\pm$ is used within a list of values,
we define $\pm a := \{a,-a \}.$
Let $z\in \C$, $n,k \in \No$ unless otherwise stated. The definition that we use
for the Pochhammer symbol is given by
\begin{eqnarray}
&&\hspace{-5.5cm}(z)_n := (z)(z+1)\cdots(z+n-1),\ (z)_0:=1,\ z\in\C, \label{iden:poch}\\
&&\hspace{-5.5cm}(z_1,\ldots,z_k)_n:=(z_1)_n\cdots(z_k)_n. \label{iden:poch1}
\end{eqnarray}
We will also adopt the following compact notation for the minimum and maximum of any two integers,
$m,n\in\Z$, 
\[
m\!\vee\!n:=\max(m,n),
\quad
m\!\wedge\!n:=\min(m,n).
\]

Define the generalized hypergeometric
series \cite[Chapter 16]{NIST:DLMF}
\begin{equation}
\hyp{r}{s}{a_1,\ldots,a_r}{b_1,\ldots,b_s}{x}
=\sum_{n=0}^\infty
\frac{{(}a_1,\ldots,a_r{)_n}}
{{(}b_1,\ldots,b_s{)_n}}\frac{x^n}{n!},
\end{equation}
and the Kamp\'e de F\'eriet double
hypergeometric series
\cite[(28)]{SriKarl}
\begin{eqnarray}
&&\hspace{-0.7cm}\KdF{l:m;n}{p:q;k}{\alpha_1,\ldots,\alpha_l:\beta_1,\ldots,\beta_m;\gamma_1,\ldots,\gamma_n}{a_1,\ldots,a_p:b_1,\ldots,b_q;c_1,\ldots,c_k}{x,y}
\nonumber\\
&&\hspace{3.5cm}=
\sum_{r,s=0}^\infty
\frac{(a_1,\ldots,a_p)_{r+s}(b_1,\ldots,b_q)_{r}(c_1,\ldots,c_k)_s}
{(\alpha_1,\ldots,\alpha_l)_{r+s}(\beta_1,\ldots,\beta_m)_{r}(\gamma_1,\ldots,\gamma_n)_s}
\frac{x^r}{r!}\frac{y^s}{s!}.
\label{KdF}
\end{eqnarray}
\section{{Two-dimensional contiguous relations which correspond to the linearization of a product of orthogonal polynomials}}

In this section we derive the general relation
for two-dimensional contiguous relations for linearization coefficients
of unscaled orthogonal polynomials, namely Theorem \ref{THMAAA}
below. 
This theorem is similar to \cite[Theorem 2.1]{IsmailKasraouiZeng2013}, albeit for unscaled polynomials $p_n(x)$.  In the remainder of this paper, we compute explicitly the linearization coefficients of the products of two and three orthogonal polynomials.  From this we show examples of these contiguous relations for certain continuous hypergeometric orthogonal polynomials in the Askey scheme. However, in 
\ref{scaledLag} we extend this theorem for scaled Laguerre polynomials. The general result presented in 
\cite[Theorem 2.1]{IsmailKasraouiZeng2013} is for
integrals of products of 
scaled orthogonal polynomials $p_n(\lambda x)$, 
for some $\lambda\in\CC$.

\medskip

Consider a sequence of continuous orthogonal polynomials 
$p_n(x)$,
$m,n\in\N_0$, 
$x\in\C$,
which satisfy the orthogonality relation
\begin{equation}
\int_{\cal I} p_m(x;\ba) p_n(x;\ba) \,\dd\mu =  h_n(\ba) \delta_{m,n},
\label{Orthog}
\end{equation}
where ${\cal I}$ is the support of the measure $\mu$ and ${\ba}$ is a set of parameters.
Orthogonal polynomials satisfy the following 
three-term recurrence relation, with the assumption $p_{-1}(x;{\ba}):=0$,
\begin{equation}
p_{n+1}(x;{\ba})=(A_n x+B_n) p_n(x;{\ba})-C_np_{n-1}(x;{\ba}).
\label{recur}
\end{equation}
The linearization coefficients ${\sf a}_{k,{\bf n}}:={\sf a}_{\bf m}$,
where 
${\bf n}:=\{n_1,\ldots,n_{N}\}
\in\N_0^N$, 
$N\ge 2$, 
$k\in\N_0$,
${\bf m}:=\{k\}\cup{\bf n}$,
are defined using 
\begin{equation}
p_{n_1}(x;{\ba})\cdots p_{n_N}(x;{\ba})
=\sum_{k=0}^{n_1+\cdots+n_N} {\sf a}_{{\bf m}}\,p_k(x;{\ba}).
\label{multlin}
\end{equation}
Note that even though it is true that
$k\in\{0,\ldots,n_1+\cdots+n_N\}$, 
it may be that $k$ has a more restricted range depending on the specific
orthogonal polynomials involved.

\medskip
Now consider the integral over $N+1$ orthogonal polynomials
${\sf P}({\bf m};\ba)\in\R$, where
$\bm:=\{n_1,\ldots,n_{N+1}\}$ by 
\begin{equation}
{\sf P}(\bm;\ba)
:=\int_{\cal I} p_{n_1}(x;{\ba})\cdots p_{n_{N}}(x;{\ba}) p_{n_{N+1}}(x;{\ba})\,\dd\mu.
\label{deff}
\end{equation}
We can formally see that integral is 
associated with the 
linearization of a product of
$N$ orthogonal polynomials as follows.
Without loss of generality
choose 
\begin{equation}
n_1\le n_2 \le \cdots\le n_{N}\le n_{N+1}.
\end{equation}
Then after substituting \eqref{multlin} in \eqref{deff} and using \eqref{Orthog}, one obtains
\begin{equation}
{\sf P}(\bm;\ba)=
\int_{\cal I} 
\sum_{k=0}^{n_1+\cdots+n_N}
{\sf a}_{\bm}\,
p_{n_{N+1}}(x;\ba)
p_k(x;\ba)\,\dd\mu
=h_{N+1}(\ba){\sf a}_{\bm},
\label{fa}
\end{equation}
and since overlap in the orthogonality 
only occurs if $n_{N+1}\le n_1+\cdots +n_N$,
the integral ${\sf P}(\bm;\ba)$ will vanish otherwise.



\medskip
Now define 
\[
{\sf P}_j^{\pm}:=
{\sf P}_j^{\pm}({\bf m};\ba):={\sf P}(n_1,\ldots,n_{j-1},n_{j}\pm 1,n_{j+1},\ldots,n_{N+1};\ba).
\]

\medskip
\begin{thm}
Let $N,j,k\in\N_0$, $1\le j<k\le N+1$, $n_j,n_k\in{\bf m}$, $x\in{\cal I}$, 
$A_{n_j},B_{n_j},C_{n_j}$ satisfy the
three-term recurrence relation 
(\ref{recur}).
Then the definite integral ${\sf P}({\bm};\ba)$ corresponding to a product of $N+1$ 
orthogonal polynomials defined by (\ref{deff}), satisfies the 
following sequence of $\binom{N+1}{2}$ contiguous relations
\[
(B_{n_j}A_{n_k}-A_{n_j}B_{n_k}){\sf P}-A_{n_k}{\sf P}^{+}_{j}-C_{n_j}A_{n_k}{\sf P}^{-}_{j}+A_{n_j}{\sf P}^{+}_{k}+C_{n_k}A_{n_j}{\sf P}^{-}_{k}=0.
\]
\label{THMAAA}
\end{thm}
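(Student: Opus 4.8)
The plan is to use the three-term recurrence \eqref{recur} twice inside the integrand of \eqref{deff}, once to rewrite $x\,p_{n_j}(x;\ba)$ and once to rewrite $x\,p_{n_k}(x;\ba)$, and then eliminate the common quantity $\int_{\cal I} x\, p_{n_1}\cdots p_{n_{N+1}}\,\dd\mu$ between the two resulting identities. Concretely, solving \eqref{recur} for $A_n x\, p_n$ gives $A_{n_j} x\, p_{n_j} = p_{n_j+1} - B_{n_j} p_{n_j} + C_{n_j} p_{n_j-1}$, and multiplying through by the remaining $N$ polynomial factors $\prod_{i\ne j} p_{n_i}$ and integrating against $\dd\mu$ yields
\[
A_{n_j}\!\int_{\cal I}\! x\, p_{n_1}\cdots p_{n_{N+1}}\,\dd\mu
= {\sf P}^{+}_{j} - B_{n_j}{\sf P} + C_{n_j}{\sf P}^{-}_{j}.
\]
The identical manipulation with index $k$ in place of $j$ gives
\[
A_{n_k}\!\int_{\cal I}\! x\, p_{n_1}\cdots p_{n_{N+1}}\,\dd\mu
= {\sf P}^{+}_{k} - B_{n_k}{\sf P} + C_{n_k}{\sf P}^{-}_{k}.
\]

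Next I would eliminate the integral $\int_{\cal I} x\, p_{n_1}\cdots p_{n_{N+1}}\,\dd\mu$ by multiplying the first identity by $A_{n_k}$, the second by $A_{n_j}$, and subtracting. The cross terms $A_{n_j}A_{n_k}\int x\,p_{n_1}\cdots p_{n_{N+1}}\,\dd\mu$ cancel exactly, leaving
\[
A_{n_k}{\sf P}^{+}_{j} - A_{n_k}B_{n_j}{\sf P} + A_{n_k}C_{n_j}{\sf P}^{-}_{j}
- A_{n_j}{\sf P}^{+}_{k} + A_{n_j}B_{n_k}{\sf P} - A_{n_j}C_{n_k}{\sf P}^{-}_{k}=0,
\]
and collecting the coefficient of ${\sf P}$ as $(A_{n_j}B_{n_k}-A_{n_k}B_{n_j})$ and rearranging signs reproduces exactly the stated relation. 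Since the pair $(j,k)$ with $1\le j<k\le N+1$ may be chosen in $\binom{N+1}{2}$ ways, and the derivation is valid for each such pair, one obtains the claimed family of $\binom{N+1}{2}$ contiguous relations.

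There is no serious obstacle here; the argument is a short linear-algebra elimination on top of the recurrence, and the only points requiring care are bookkeeping ones: that the recurrence for the $j$th factor only touches the index $n_j$ (so the other $N$ factors pass through the integral untouched, producing precisely ${\sf P}^{\pm}_j$ and ${\sf P}$), that the convention $p_{-1}\equiv 0$ handles the boundary case $n_j=0$ correctly in the ${\sf P}^{-}_j$ term, and that the sign conventions in the final collection of terms match the statement. A remark worth making is that the relation is purely formal — it holds as an identity among the quantities ${\sf P}(\bm;\ba)$ regardless of whether any particular one vanishes under the support condition $n_{N+1}\le n_1+\cdots+n_N$ discussed around \eqref{fa}, and via \eqref{fa} it descends to a corresponding two-dimensional contiguous relation for the linearization coefficients ${\sf a}_{\bm}$ after dividing by $h_{N+1}(\ba)$.
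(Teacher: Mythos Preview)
Your proof is correct and is essentially the same argument as the paper's: both use the three-term recurrence at indices $j$ and $k$ to express the single auxiliary quantity $\int_{\cal I} x\,p_{n_1}\cdots p_{n_{N+1}}\,\dd\mu$ in two ways and then eliminate it. The paper phrases this slightly less symmetrically (start from ${\sf P}^{+}_j$, expand $p_{n_j+1}$ via \eqref{recur}, then substitute for $x\,p_{n_k}$ using \eqref{inforecurrel}, and clear the denominator $A_{n_k}$), but the underlying computation is identical.
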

\begin{proof}
Choose a quantum number $m:=n_{j}\in{\bf m}$, and then evaluate ${\sf P}^{+}_j$ 
using (\ref{deff}). Then use (\ref{recur}) to replace $p_{m+1}(x)$ with
$(A_{m}x+B_{m})p_{m}(x)-C_{m}p_{m-1}(x)$ in the integrand. Now choose a different
quantum number 
$n:=n_{k}\in{\bf m}$ and replace $xp_n(x)$ in the resulting expression with 
\begin{equation}
xp_n(x)=\frac{1}{A_n}\left(p_{n+1}-B_np_n(x)+C_np_{n-1}\right).
\label{inforecurrel}
\end{equation}
Multiplying both sides of the resulting expression by $A_{n}$ produces 
a five term two-dimensional contiguous relation for ${\sf P}$ involving 
${\sf P}_j^{\pm}$ and ${\sf P}_k^{\pm}$.
Repeating this process for all $\binom{N+1}{2}$ unique combinations of 
quantum numbers in ${\bf m}$ completes the proof.
\end{proof}

\begin{rem}
For a linearization of a product of $N$ orthogonal polynomials, choose $j\ne k\in\{1,\ldots,N\}$.
The three-term recurrence relation \eqref{inforecurrel} is symmetric under permutation of $n_j$ and $n_k$.  Hence each unique contiguous relation is the result of choosing two quantum numbers from $N+1$ possibilities
and therefore one will obtain a sequence of $\binom{n+1}{2}$ contiguous relations for the 
definite integral of a product of $N+1$ orthogonal 
polynomials.
\end{rem}
\noindent In the remainder of the paper, we compute examples of these contiguous relations for certain continuous hypergeometric orthogonal polynomials in the Askey scheme.

\section{Gegenbauer polynomials}

The Gegenbauer (or ultraspherical) polynomials can be defined  as \cite[(9.8.19)]{Koekoeketal}
\begin{equation}
C_n^\lambda (x) = \frac{(2\lambda)_n}{n!} \pFq{2}{1}{-n,n+2\lambda}{\lambda 
+\frac12}{\frac{1-x}{2}},
\label{ortho:gegen}
\end{equation}
where $n\in\N_0$, 
$\lambda \in \C\setminus \{0\}$, 
$x\in\C$.
Gegenbauer polynomials are orthogonal on $x\in(-1,1)$, with orthogonality relation
\cite[(9.8.20)]{Koekoeketal}
\begin{equation}
\int_{-1}^1 C_m^\lambda(x)C_n^\lambda(x) (1-x^2)^{\lambda-\frac12}\,\dd x
=\frac{\pi\,\Gamma(2\lambda+n)\delta_{m,n}}{2^{2\lambda-1}(n+\lambda)n!\Gamma(\lambda)^2}
=: h_n^\lambda\delta_{m,n},
\label{orthoggeg}
\end{equation}
where $\lambda \in (-\tfrac12,\infty)\backslash \{0\}$. 
The three-term recurrence relation for Gegenbauer polynomials is described via 
\eqref{recur} with \cite[Table 18.9.1]{NIST:DLMF}
\begin{gather} 
 A_n = \frac{2(n + \lambda)}{n+1},\quad \quad B_n = 0, \quad \quad C_n = \frac{n + 2 \lambda -1}{n+1}. \label{recurGeg}
\end{gather}

\subsection{Linearization of a product of two Gegenbauer polynomials}

Gegenbauer polynomials 
have a linearization formula
{for a product of two Gegenbauer polynomials} given by \cite[(18.18.22)]{NIST:DLMF}
\begin{equation}
C^\lambda_m (x) C^\lambda_n (x) = \sum^{m}_{k=0} B^{\lambda}_{k,m,n} 
C^{\lambda}_{m+n-2k} (x),  
\label{lin:gegen}
\end{equation}
where $m,n \in \mathbb{N}_0$, without loss of 
generality $n\geq m$, and 
\begin{equation} \label{geg2}
B^{\lambda}_{k,m,n} := \frac{(m\!+\!n\!+\! \lambda \!-\! 2k)(m\!+\!n\!-\!2k)!(\lambda)_k 
(\lambda)_{m-k} (\lambda)_{n-k} (2\lambda)_{m+n-k}}{(m\!+\!n\!+\!\lambda \!-\!k) 
k! (m\!-\!k)!(n\!-\!k)! (\lambda)_{m+n-k} (2\lambda)_{m+n-2k}}.
\end{equation}
Using (\ref{orthoggeg}), we see that  
(\ref{lin:gegen}) {is equivalent to the following integral of a product of three Gegenbauer polynomials}
\begin{equation}
{\sf C}(k,m,n;\lambda):=\int_{-1}^1 
C_m^\lambda(x)
C_n^\lambda(x)
C_{m+n-2k}^\lambda(x)
(1-x^2)^{\lambda-\frac12} \,\dd x =  h_{m+n-2k}^\lambda B_{k,m,n}^\lambda.
\label{prod2gegint}
\end{equation}

\begin{rem}
Using the recurrence relations given by \eqref{recurGeg} with Theorem \ref{THMAAA} one obtains the following contiguous relations:
\begin{eqnarray}
&&(m\!+\!\lambda)(p+1){\sf C}_1^{+}\!+\!(p\!+\!2\lambda \!-\!1)
(m\!+\!\lambda) {\sf C}_1^{-}\!=\!( p\!+\!\lambda)(m+1) {\sf C}_2^{+}\!+\!(p+ 
\lambda)(m+2\lambda \!-\!1){\sf C}_2^{-} , \label{3gcont1} \\
 &&(n\!+\!\lambda)(m+1) {\sf C}_2^{+}\!+\!(m\!+\!2 \lambda \!-\!1)
(n +\lambda) {\sf C}_2^{-}\!=\!( m\!+\!\lambda)(n+1) {\sf C}_3^{+}\!+\!
(m+ \lambda)(n+2\lambda \!-\!1){\sf C}_3^{-}, \label{3gcont2} \\
&&(n\!+\!\lambda)(p+1) {\sf C}_1^{+}\!+\!(p\!+\!2 \lambda \!-\!1)
(n +\lambda) {\sf C}_1^{-}\!=\!( p\!+\!\lambda)(n+1) {\sf C}_3^{+}\!+\!(p+ \lambda)(n+2\lambda \!-\!1){\sf C}_3^{-}. \label{3gcont3}
\end{eqnarray}
To use these with \eqref{prod2gegint}, we can alter it by taking $p = m+n-2k$, to get the following:
\begin{equation} \label{prod2gegint2}
    {\sf C}(p,m,n;\lambda) := h_p^\lambda B_{\frac12(m+n-p),m,n}^\lambda.
\end{equation}
For these contiguous relations 
\eqref{3gcont1}-\eqref{3gcont3} to be satisfied, there are conditions on the parameters.  Namely, $p \in \{n-m,...n+m\}$ and $m+n-p\pm1$ must be even.
These can be combined with \eqref{prod2gegint2}, where the numbers $1$, $2$ and $3$ correspond to the parameters $p$, $m$, and $n$ respectively.  
This produces the following:
\begin{eqnarray}
&&\label{B1t}\hspace{-0.3cm}   \frac{(\lambda+m)   (2 \lambda +p)}{(\lambda+p+1)} B_{\frac12(m+n-p-1),m,n} +  \frac{p   ( \lambda+m)}{(\lambda+p\!-\!1)} B_{\frac12(m+n-p+1),m,n} \nonumber \\
&& \hspace{1.35cm} = (m+1)   B_{\frac12(m+n-p+1),m+1,n} + (m+2\lambda -1)  B_{\frac12(m+n-p-1),m-1,n},   \\
&&\hspace{-0.3cm}(n + \lambda)(m+1)   B_{\frac12(m+n-p+1),m+1,n} +(m + 2 \lambda -1)(n + \lambda)   B_{\frac12(m+n-p-1),m-1,n} \nonumber \\
&& \hspace{0.85cm}= ( m + \lambda)(n+1)   B_{\frac12(m+n-p+1),m,n+1} + (m+ \lambda)(n+2\lambda -1)  B_{\frac12(m+n-p-1),m,n-1}, \\
&&\hspace{-0.3cm}  \frac{(\lambda+n)(2\lambda+p)}{(\lambda+p+1)} B_{\frac12(m+n-p-1),m,n} +  \frac{p(\lambda+n)}{(\lambda+p-1)} B_{\frac12(m+n-p+1),m,n} \nonumber \\
&& \hspace{1.35cm}= (n+1)   B_{\frac12(m+n-p+1),m,n+1} + (n+2\lambda -1)  B_{\frac12(m+n-p-1),m,n-1}.
\end{eqnarray}
Take for instance for \eqref{B1t} using  
 the identities of \eqref{geg2} and \eqref{orthoggeg}.
Then it reduces to
\begin{align}
    &\hspace{-2.4em}(m\!+\!\lambda)(p+1){\sf C}_1^{+}\!+\!(p\!+\!2\lambda \!-\!1)
(m\!+\!\lambda) {\sf C}_1^{-}\!-\!( p\!+\!\lambda)(m+1) {\sf C}_2^{+}\!-\!(p+ 
\lambda)(m+2\lambda \!-\!1){\sf C}_2^{-} \nonumber\\
& \hspace{-1.9em} = \frac{2p!(\lambda)_{\frac12 (n+m-p-1})(\lambda)_{\frac12 (m+p-n-1)}(\lambda)_{\frac12 (n+p-m-1)}(2 \lambda)_{\frac12 (n+m+p-1)}}{\Gamma(\frac12 (n+m-p+1))\Gamma(\frac12 (m-n+p+1))\Gamma(\frac12 (n-m+p+1))(\lambda)_{\frac12 (n+m+p-1)}(2\lambda)_{p-1}} \nonumber\\
& \hspace{-0.1em} \times \left( \frac{(p\!+\!1)(\lambda \!+\!m)(2\lambda \!+\!m\!+\!p\!-\!n\!-\!1)(2\lambda \!+\!n\!+\!p\!-\!m\!-\!1)(4 \lambda \!+\!n\!+\!m\!+\!p\!-\!1)}{(m\!+\!p\!-\!n\!+\!1)(n\!+\!p\!-\!m\!+\!1)(2\lambda\!+\!p\!-\!1)(2 \lambda\!+\!m\!+\!n\!+\!p\!-\!1)(2 \lambda \!+\!m\!+\!n\!+\!p\!+\!1)} \right. \nonumber\\
& \hspace{2.4em} \!-\! \frac{(m\!+\!1)(p\!+\!\lambda)(2\lambda \!+\!m\!+\!n\!-\!p\!-\!1)(2\lambda\!+\!m\!+\!p\!-\!n\!-\!1)(4\lambda\!+\!m\!+\!n\!+\!p\!-\!1)}{(m\!+\!n\!-\!p\!+\!1)(m\!+\!p\!-\!n\!+\!1)(2\lambda\!+\!p\!-\!1)(2\lambda\!+\!m\!+\!n\!+\!p\!-\!1)(2\lambda\!+\!m\!+\!n\!+\!p\!+\!1)}\nonumber\\
& \hspace{2.4em} \!-\! \frac{(\lambda\!+\!p)(2\lambda\!+\!m\!-\!1)(2\lambda\!+\!n\!+\!p\!-\!m\!-\!1)}{(n\!+\!p\!-\!m\!-\!1)(2\lambda\!+\!p\!-\!1)(2\lambda\!+\!m\!+\!n\!+\!p\!-\!1)} \nonumber\\
& \hspace{2.4em} \!+\! \left. \frac{(\lambda\!+\!m)(2\lambda\!+\!m\!+\!n\!-\!p\!-\!1)}{(m\!+\!n\!-\!p\!+\!1)(2\lambda\!+\!m\!+\!n+p-1)} \right).
\end{align}
Since the rational coefficient multiplying the factorials and Pochhammer symbols vanishes, we can see that this identity is trivially satisfied.
So this is a clear validation of the contiguous relations 
implied by Theorem \ref{THMAAA}.
Note that similar validations can be obtained by using \eqref{3gcont2}, \eqref{3gcont3}, which we leave to the reader.
\end{rem}

%

\subsection{Linearization of a product of three Gegenbauer polynomials}

\noindent Now we present the linearization formula for a product of three Gegenbauer polynomials.

\begin{thm}
\label{prodthreegegt}
Let $p,m,n\in\N_0$ and without loss of generality $p\le m\le n$, $\lambda\in(-\frac12,\infty)\setminus\{0\}$,
$x\in\C$.
Then 
\begin{equation}
C_p^\lambda(x)
C_m^\lambda(x)
C_n^\lambda(x)
=
\sum_{k=0}^{\lfloor\frac{p+m+n}{2}\rfloor}
{\sf F}_{k,p,m,n}^\lambda C_{p+m+n-2k}^\lambda(x),
\label{prodthreegeg}
\end{equation}
where
\begin{equation}
{\sf F}_{k,p,m,n}^\lambda:=
\left\{
\begin{array}{lll}
{\sf D}_{k,p,m,n}^\lambda, \quad&\mbox{if}&\quad 0\le k \le p-1,\\[0.1cm]
{\sf E}_{k,p,m,n}^\lambda, \quad&\mbox{if}&\quad p\le k \le \lfloor \frac{p+m+n}{2}\rfloor,
\end{array}
\right.
\end{equation}
\small
\begin{eqnarray}
&&\hspace{-0.5cm}{\sf D}_{k,p,m,n}^\lambda\!:=
\frac{
(m\!+\!n)!(\lambda)_m(\lambda)_n
(\lambda\!+\!m\!+\!n\!+\!p\!-\!2k)(m\!+\!n\!+\!p\!-\!2k)!(\lambda)_k(\lambda)_{p-k}(\lambda)_{m+n-k}(2\lambda)_{m+n+p-k}
}{
m!n!(\lambda)_{m\!+\!n}
(\lambda\!+\!m\!+\!n\!+\!p\!-\!k)k!(p-k)!(m\!+\!n-k)!(\lambda)_{m+n+p-k}(2\lambda)_{m+n+p-2k}
}\nonumber\\
&&\hspace{2cm}\times\!
\hyp{11}{10}{
\topt{\lambda}{\lambda+p-k},
\topr{-k}
{-m}
{-n}
{-m-n+k}\!,
\topt{-\lambda-m-n}{-\lambda-m-n-p+k},
\!\frac{-\lambda+2-m-n}{2},
\frac{-2\lambda+\topt{1}{2}-m-n}{2}
}{
1\!+\!p\!-\!k,
\topr{-\lambda+1-k}{-\lambda+1-m}{-\lambda+1-n}{-\lambda+1-m-n+k}\!,
\topt{-2\lambda+1-m-n}{-2\lambda+1-m-n-p+k}\!,
\frac{-\lambda-m-n}{2},
\!\frac{\topt{0}{1}-m-n}{2}
}{1}\!,
\end{eqnarray}
\begin{eqnarray}
&&\hspace{-0.3cm}{\sf E}_{k,p,m,n}^\lambda\!:=
\frac{
(\lambda\!+\!m\!+\!n\!+\!p\!-\!2k)(m\!+\!n\!+\!2p\!-\!2k)!(\lambda)_p(\lambda)_{k-p}(\lambda)_{m+p-k}(\lambda)_{n+p-k}
(\lambda)_{m+n+p-2k}(2\lambda)_{m+n+p-k}
}{
(\lambda\!+\!m\!+\!n\!+\!p\!-\!k)p!(k\!-\!p)!(m\!+\!p\!-\!k)!(n\!+\!p\!-\!k)!(\lambda)_{m+n+p-k}(\lambda)_{m+n+2p-2k}(2\lambda)_{m+n+p-2k}
}\nonumber\\
\times\!
&&
\hyp{11}{10}{
\topt{\lambda}{\lambda+k-p}\!,
\topq{-p}{-m-p+k}{-n-p+k}{-m-n-p+2k}\!,
\topt{-\lambda-m-n-p+k}{-\lambda-m-n-2p+2k}\!,
\frac{-\lambda+2k-2p-m-n+2}{2},
\frac{-2\lambda+\topt{1}{2}+2k-2p-m-n}{2}
}{
1\!+\!k\!-\!p,
\topr{-\lambda+1-p}{-\lambda+1-m-p+k}{-\lambda+1-n-p+k}{-\lambda+1-m-n-p+2k}\!,
\topt{-2\lambda+1-m-n-p+k}{-2\lambda+1-m-n-2p+{2}k}\!,
\frac{-\lambda+2k-2p-m-n}{2},
\!\frac{\topt{0}{1}+2k-2p-m-n}{2}}{1}\!.
\end{eqnarray}
\normalsize
\end{thm}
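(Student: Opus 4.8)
The plan is to iterate the two-fold linearization \eqref{lin:gegen}. First I would expand $C_m^\lambda C_n^\lambda$ by \eqref{lin:gegen} (legitimate since $m\le n$), multiply through by $C_p^\lambda(x)$, and then expand each product $C_p^\lambda(x)C_{m+n-2j}^\lambda(x)$ again by \eqref{lin:gegen}, obtaining
\[
C_p^\lambda(x)C_m^\lambda(x)C_n^\lambda(x)=\sum_{j,i}B^\lambda_{j,m,n}\,B^\lambda_{i,\,p\wedge(m+n-2j),\,p\vee(m+n-2j)}\,C_{p+m+n-2j-2i}^\lambda(x).
\]
Collecting the coefficient of $C_{p+m+n-2k}^\lambda(x)$ forces $i=k-j$, so
\[
{\sf F}_{k,p,m,n}^\lambda=\sum_{j}B^\lambda_{j,m,n}\,B^\lambda_{\,k-j,\,p\wedge(m+n-2j),\,p\vee(m+n-2j)},
\]
a single finite sum whose range is dictated by the admissible index ranges of the two $B^\lambda$ factors in \eqref{geg2}.

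Pinning down that range is what produces the case split. One needs $0\le j\le m$ from the first factor and $0\le k-j\le p\wedge(m+n-2j)$ from the second; together with the hypothesis $p\le m\le n$ this forces $j$ to run from $\max(0,k-p)$ to $\min(k,m,m+n-k)$. Hence when $0\le k\le p-1$ the sum runs over $j=0,\dots,k$, whereas when $p\le k\le\lfloor\tfrac{p+m+n}{2}\rfloor$ it runs over $j=k-p,\dots$, which I would reindex by $s:=j-(k-p)\ge0$ (equivalently by $i=k-j$). The first regime should yield ${\sf D}_{k,p,m,n}^\lambda$ and the second ${\sf E}_{k,p,m,n}^\lambda$: evaluating the summand at its lower endpoint already pins down the displayed rational/Pochhammer prefactor, which is $B^\lambda_{0,m,n}\,B^\lambda_{k,p,m+n}$ in the first case and $B^\lambda_{k-p,m,n}\,B^\lambda_{p,p,m+n+2p-2k}$ in the second. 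Indeed $B^\lambda_{0,m,n}=\frac{(m+n)!(\lambda)_m(\lambda)_n}{m!\,n!\,(\lambda)_{m+n}}$ is exactly the first fraction in ${\sf D}$, and the shifted lower limit $j=k-p$ is what produces the factor $(\lambda)_{k-p}$ in ${\sf E}$.

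It then remains to identify the normalized sum as the stated ${}_{11}F_{10}$ at argument $1$, which is a matter of Pochhammer bookkeeping. I would rewrite the ratio of the $j$-th summand to the endpoint summand as a product of Pochhammer symbols in $j$ (resp.\ $s$), using the reflection identity $(a)_{N-j}=(-1)^j(a)_N/(1-a-N)_j$ and the duplication identity $(a)_{2j}=4^j(\tfrac a2)_j(\tfrac{a+1}2)_j$; the latter is forced by the arguments shifted by $2j$ arising from $(m+n-2j)!$, from the $(2\lambda)$-Pochhammer symbols in the second $B^\lambda$, and from the factorials of index differences there, and it is precisely this duplication that creates the stacked half-integer pairs written $\frac{-\lambda+2-m-n}{2}$, $\frac{-2\lambda+\topt{1}{2}-m-n}{2}$, $\frac{\topt{0}{1}-m-n}{2}$, and so on. Reading off the numerator and denominator parameters and counting them (eleven upstairs, ten downstairs) would then give the asserted ${}_{11}F_{10}$ in each case; the four terminating numerator parameters $\{-k,-m,-n,-m-n+k\}$ (resp.\ $\{-p,-m-p+k,-n-p+k,-m-n-p+2k\}$) are just the negatives of the four natural upper cut-offs on the summation index. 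The main obstacle is not conceptual but organizational: keeping the several dozen Pochhammer factors straight across both cases, respecting the sorting conventions of \eqref{lin:gegen} when $m+n-2j<p$, and checking the boundary $k=p$, where ${\sf D}$ and ${\sf E}$ must agree. Carrying this out would establish \eqref{prodthreegeg}.
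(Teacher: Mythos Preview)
Your approach is essentially the same as the paper's. The paper first expresses ${\sf F}_{k,p,m,n}^\lambda$ as the integral \eqref{intexprcoefsfF} via orthogonality, then inserts the two-fold linearization of $C_m^\lambda C_n^\lambda$ and evaluates each resulting three-Gegenbauer integral by \eqref{prod2gegint}; this produces precisely your sum $\sum_j B^\lambda_{j,m,n}\,B^\lambda_{k-j,p,m+n-2j}$ with the same lower cutoff $j\ge\max(0,k-p)$, the same case split at $k=p$ (with the same reindexing in the second regime), and the same terse closing remark that factoring into Pochhammer symbols yields the stated ${}_{11}F_{10}$'s. Your direct iteration simply bypasses the orthogonality detour, but the algebra is identical.
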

\begin{proof}
Consider (\ref{prodthreegeg}), where 
${\sf F}_{k,p,m,n}^\lambda$ is to be determined.  
Using (\ref{prod2gegint}), one obtains {the following integral of a product of four Gegenbauer polynomials which is equivalent to the linearization of a product of three Gegenbauer polynomials, namely}
\begin{equation}
{\sf F}_{k,p,m,n}^\lambda=\frac{1}{ h_{m+n+p-2k}^\lambda}
\int_{-1}^1 
C_p^\lambda(x)
C_m^\lambda(x)
C_n^\lambda(x)
C_{p+m+n-2k}^\lambda(x)
(1-x^2)^{\lambda-\frac12}\,\dd x.
\label{intexprcoefsfF}
\end{equation}
Now use (\ref{lin:gegen}) to write the product of Gegenbauer polynomials
of degree $m$ and $n$ as a single sum over $l\in\N_0$, $0\le l\le m$.
This converts (\ref{intexprcoefsfF}) into a single sum of an integral of a 
product of three Gegenbauer polynomials whose terms can be evaluated using 
(\ref{prod2gegint}).  After avoiding the factor $l+p-k$ becoming negative (in which case 
${\sf F}_{k,p,m,n}^\lambda$ vanishes), we obtain
\[
{\sf F}_{k,p,m,n}^\lambda=\sum_{l=\max(0,k-p)}^m 
B_{l,m,n}^\lambda
B_{k-l,p,m+n-2l}^\lambda,
\]
which breaks the linearization formula into two regions depending on $k$, namely
\[
C_p^\lambda(x)
C_m^\lambda(x)
C_n^\lambda(x)=
\sum_{k=0}^{p-1}
C_{p+m+n-2k}^\lambda(x) {\sf D}_{k,p,l,m}^\lambda
+
\sum_{k=p}^{\lfloor\frac{p+m+n}{2} \rfloor}
C_{p+m+n-2k}^\lambda(x) {\sf E}_{k,p,l,m}^\lambda,
\]
where
\[
{\sf D}_{k,p,m,n}^\lambda:=\sum_{l=0}^m 
B_{l,m,n}^\lambda
B_{k-l,p,m+n-2l}^\lambda,
\quad
{\sf E}_{k,p,m,n}^\lambda:=\sum_{l=0}^{p+m-k}
B_{l+k-p,m,n}^\lambda
B_{p-l,p,m+n+2p-2k-2l}^\lambda.
\]
Factoring the products of linearization coefficients in terms of 
Pochhammer symbols and factorials completes the proof.
\end{proof}

\begin{rem}
Setting $p=0$ in (\ref{prodthreegeg}) straightforwardly produces (\ref{lin:gegen}) since the first sum
vanishes and the ${}_{11}F_{10}(1)$ in the second sum is unity.
\end{rem}

\begin{cor} 
Let $p,m,n\in\N_0$ and without loss of generality $p\le m\le n$, $\lambda\in(-\frac12,\infty)\setminus\{0\}$.
Then 
\begin{equation}
\frac{
(2\lambda)_p
(2\lambda)_m
(2\lambda)_n
}{p!m!n!}=
\frac{(2\lambda)_{p+m+n}}{(p+m+n)!}
\sum_{k=0}^{\lfloor\frac{p+m+n}{2}\rfloor}
{\sf F}_{k,p,m,n}^\lambda
\frac{
\left(\frac{-p-m-n}{2}\right)_k
\left(\frac{-p-m-n+1}{2}\right)_k
}{
\left(\frac{-2\lambda-p-m-n+1}{2}\right)_k
\left(\frac{-2\lambda-p-m-n+2}{2}\right)_k
}.
\label{qto1firstlimit}
\end{equation}
\end{cor}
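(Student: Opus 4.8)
The plan is to obtain \eqref{qto1firstlimit} simply by evaluating the linearization formula \eqref{prodthreegeg} at the endpoint $x=1$. In the hypergeometric representation \eqref{ortho:gegen} the argument $\frac{1-x}{2}$ of the ${}_2F_1$ vanishes at $x=1$, so the series collapses to its constant term and $C_n^\lambda(1)=(2\lambda)_n/n!$. Substituting this on both sides of \eqref{prodthreegeg} gives
\[
\frac{(2\lambda)_p(2\lambda)_m(2\lambda)_n}{p!\,m!\,n!}
=\sum_{k=0}^{\lfloor(p+m+n)/2\rfloor}{\sf F}_{k,p,m,n}^\lambda\,
\frac{(2\lambda)_{p+m+n-2k}}{(p+m+n-2k)!}.
\]
Hence the corollary reduces to the elementary identity, with $N:=p+m+n$,
\[
\frac{(2\lambda)_{N-2k}}{(N-2k)!}
=\frac{(2\lambda)_N}{N!}\cdot
\frac{(-N/2)_k\,((-N+1)/2)_k}{((-2\lambda-N+1)/2)_k\,((-2\lambda-N+2)/2)_k},
\]
valid for $0\le k\le\lfloor N/2\rfloor$, which is precisely the range of $k$ in \eqref{prodthreegeg}, so that $N-2k\ge 0$ and the endpoint values above are the genuine special values $C_{N-2k}^\lambda(1)$.

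To prove this last identity I would use three standard Pochhammer manipulations: $(a)_{N-2k}=(a)_N/(a+N-2k)_{2k}$; the duplication formula $(b)_{2k}=4^k\,(b/2)_k\,((b+1)/2)_k$; and the reflection rule $(c-k)_k=(-1)^k(1-c)_k$. Applying these with $a=2\lambda$ to $(2\lambda)_{N-2k}$ and with $a=1$ to $(N-2k)!=(1)_{N-2k}$ yields
\[
(2\lambda)_{N-2k}=\frac{(2\lambda)_N}{4^k\,((-2\lambda-N+1)/2)_k\,((-2\lambda-N+2)/2)_k},
\qquad
(N-2k)!=\frac{N!}{4^k\,(-N/2)_k\,((-N+1)/2)_k},
\]
where the sign $(-1)^{2k}$ produced by the reflection step is trivial. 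Dividing the two displays cancels the common factor $4^k$ and produces exactly the rational function of $k$ appearing in \eqref{qto1firstlimit}.

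There is no genuinely hard step here: the only care needed is bookkeeping the half-integer shifts when invoking the duplication and reflection formulas, and checking the admissible range of $k$ so that the substitution $x=1$ in \eqref{prodthreegeg} is legitimate. The piecewise definition of ${\sf F}_{k,p,m,n}^\lambda$ in Theorem~\ref{prodthreegegt} plays no role, since these coefficients are carried along unchanged throughout.
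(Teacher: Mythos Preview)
Your proposal is correct and follows exactly the paper's approach: the paper's proof consists of the single sentence ``Let $x=1$ in \eqref{prodthreegeg} and $C_n^\lambda(1)=(2\lambda)_n/n!$.'' You have supplied, in addition, the explicit Pochhammer bookkeeping that rewrites $(2\lambda)_{N-2k}/(N-2k)!$ in the form appearing in \eqref{qto1firstlimit}, which the paper leaves entirely to the reader; this extra detail is sound and welcome.
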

\begin{proof}
Let $x=1$ in (\ref{prodthreegeg}) and \cite[Table 18.6.1]{NIST:DLMF}
$C_n^\lambda(1)=(2\lambda)_n/n!$.
\end{proof}

Define the following
integral of the product of four
Gegenbauer polynomials
\begin{equation}
{\sf C}(p,m,n,l;\lambda):=\int_{-1}^1 
C_p^\lambda(x)
C_m^\lambda(x)
C_n^\lambda(x)
C_{l}^\lambda(x)
(1-x^2)^{\lambda-\frac12}\,\dd x,
\end{equation}
where $l=p+m+n-2k$.
We now present the contiguous relations for the
{integral of a} product of four Gegenbauer polynomials.

\begin{thm}
Let $p,m,n\in\N_0$, and without loss of 
generality $p \le m \le n$, and $l \in \{0,...,p+m+n\}$ 
such that $p+m+n-l \pm 1$ is even. 
Then 
\begin{align}
& \hspace{-0.8cm}  (m\!+\!\lambda)(p\!+\!1)    \,{\sf F}_{\frac12(p+m+n-l+1),p+1,m,n} +(p\!+\!2 \lambda\!-\!1)(m\!+\!\lambda)    \,{\sf F}_{\frac12(p+m+n-l-1),p-1,m,n} \nonumber \\
& = ( p\!+\!\lambda)(m\!+\!1)    \,{\sf F}_{\frac12(p+m+n-l+1),p,m+1,n}\!+\!(p\!+\! \lambda)(m\!+\!2\lambda\!-\!1)   \,{\sf F}_{\frac12(p+m+n-l-1),p,m-1,n},    \\
&\hspace{-0.8cm}(n + \lambda)(p+1)    \,{\sf F}_{\frac12(p+m+n-l+1),p+1,m,n} +(p + 2 \lambda -1)(n + \lambda)    \,{\sf F}_{\frac12(p+m+n-l-1),p-1,m,n} \nonumber \\
&{}= ( p + \lambda)(n+1)    \,{\sf F}_{\frac12(p+m+n-l+1),p,m,n+1}\! +\! (p+ \lambda)(n+2\lambda -1)   \,{\sf F}_{\frac12(p+m+n-l-1),p,m,n-1},  \\
&\hspace{-0.8cm} (p+1)  \,{\sf F}_{\frac12(p+m+n-l+1),p+1,m,n} +(p + 2 \lambda -1)  \,{\sf F}_{\frac12(p+m+n-l-1),p-1,m,n} \nonumber \\
& =   \frac{(2\lambda+l)( p + \lambda)}{(\lambda+l+1)}\,{\sf F}_{\frac12(p+m+n-l-1),p,m,n} +  \frac{l(p+ \lambda)}{(\lambda+l-1)}\,{\sf F}_{\frac12(p+m+n-l+1),p,m,n},   \\
&\hspace{-0.8cm}(n\!+\!\lambda)(m\!+\!1)    \,{\sf F}_{\frac12(p+m+n-l+1),p,m+1,n} \!+\!(m\!+\!2 \lambda -1)(n\!+\!\lambda)    \,{\sf F}_{\frac12(p+m+n-l-1),p,m-1,n} \nonumber \\
& = ( m\!+\!\lambda)(n\!+\!1)    \,{\sf F}_{\frac12(p+m+n-l+1),p,m,n+1}\!+\!(m\!+\! \lambda)(n\!+\!2\lambda -1)   \,{\sf F}_{\frac12(p+m+n-l-1),p,m,n-1},  \\
&\hspace{-0.8cm}(m+1)     \,{\sf F}_{\frac12(p+m+n-l+1),p,m+1,n} +(m + 2 \lambda -1)     \,{\sf F}_{\frac12(p+m+n-l-1),p,m-1,n} \nonumber \\
& =  \frac{(2\lambda+l)( m + \lambda)}{(\lambda+l+1)}\,{\sf F}_{\frac12(p+m+n-l-1),p,m,n} +  \frac{l(m+ \lambda)}{(\lambda+l-1)}\,{\sf F}_{\frac12(p+m+n-l+1),p,m,n},   \\
&\hspace{-0.8cm}(n+1)     \,{\sf F}_{\frac12(p+m+n-l+1),p,m,n+1} +(n + 2 \lambda -1)     \,{\sf F}_{\frac12(p+m+n-l-1),p,m,n-1} \nonumber \\
& =   \frac{(2\lambda+l)( n + \lambda)}{(\lambda+l+1)}\,{\sf F}_{\frac12(p+m+n-l-1),p,m,n} +  \frac{l(n+ \lambda)}{(\lambda+l-1)}\,{\sf F}_{\frac12(p+m+n-l+1),p,m,n}, 
\end{align}
where 
\[
{\sf F}_{\frac12(p+m+n-l\pm 1),p,m,n}^\lambda:=
\left\{
\begin{array}{lll}
{\sf D}_{\frac12(p+m+n-l\pm 1),p,m,n}^\lambda, \quad&\mbox{if}&\quad m+n-p+2 \le l \le m+n+p,\\[0.1cm]
{\sf E}_{\frac12(p+m+n-l\pm 1),p,m,n}^\lambda, \quad&\mbox{if}&\quad 0 \le l\le m+n-p.
\end{array}
\right.
\]
\end{thm}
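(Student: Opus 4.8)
The plan is to specialize Theorem~\ref{THMAAA} to $N+1=4$ Gegenbauer polynomials and then re-express the resulting integral identities in terms of the linearization coefficients ${\sf F}_{k,p,m,n}^\lambda$ of Theorem~\ref{prodthreegegt}. First I would record, exactly as in \eqref{intexprcoefsfF}, the dictionary
\[
{\sf C}(p,m,n,l;\lambda)=h_l^\lambda\,{\sf F}_{\frac12(p+m+n-l),\,p,m,n}^\lambda,\qquad l=p+m+n-2k,
\]
so that the outer degree $l$ is the one carrying the measure-norm $h_l^\lambda$ while $p,m,n$ are the ``inner'' degrees entering the product being linearized. Since for Gegenbauer polynomials all four factors satisfy the same recurrence \eqref{recurGeg}, the ordering $p\le m\le n$ is only a normalization convention for writing down the explicit coefficients, and the contiguous relation of Theorem~\ref{THMAAA} may be applied to any pair of the four degrees. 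The parity hypothesis that $p+m+n-l\pm1$ be even is precisely what guarantees that the shifted subscripts $\frac12(p+m+n-l\pm1)$ occurring below are nonnegative integers.

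Next I would invoke Theorem~\ref{THMAAA} with the Gegenbauer recurrence data \eqref{recurGeg}. Because $B_n\equiv 0$, the generic five-term relation degenerates to the symmetric identity
\[
A_{n_k}\bigl({\sf P}_j^{+}+C_{n_j}{\sf P}_j^{-}\bigr)=A_{n_j}\bigl({\sf P}_k^{+}+C_{n_k}{\sf P}_k^{-}\bigr),
\]
and Theorem~\ref{THMAAA} furnishes $\binom{4}{2}=6$ of these, one for each unordered pair drawn from $\{p,m,n,l\}$. Inserting $A_n=2(n+\lambda)/(n+1)$ and $C_n=(n+2\lambda-1)/(n+1)$ and clearing the denominators $n+1$ turns each into a four-term relation among the ${\sf P}$'s, which is then rewritten via the dictionary above.

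The dichotomy in the final formulas comes from whether the pair being shifted sits entirely inside $\{p,m,n\}$ or involves $l$. If both shifted degrees lie in $\{p,m,n\}$ the outer degree is untouched, the common factor $h_l^\lambda$ divides out, and one obtains the three relations in which every ${\sf F}$ carries the same $l$ — the $(p,m)$, $(p,n)$ and $(m,n)$ relations. If instead the pair contains $l$, then shifting $l\mapsto l\pm1$ replaces $h_l^\lambda$ by $h_{l\pm1}^\lambda$; here I would use the explicit ratios read off from \eqref{orthoggeg},
\[
\frac{h_{l+1}^\lambda}{h_l^\lambda}=\frac{(2\lambda+l)(\lambda+l)}{(\lambda+l+1)(l+1)},\qquad
\frac{h_{l-1}^\lambda}{h_l^\lambda}=\frac{l\,(\lambda+l)}{(2\lambda+l-1)(\lambda+l-1)} .
\]
In the ``$l-1$'' term the factor $2\lambda+l-1$ cancels against $C_l=(l+2\lambda-1)/(l+1)$, and after clearing denominators the surviving rational coefficients are exactly $(2\lambda+l)(\,\cdot+\lambda)/(\lambda+l+1)$ and $l\,(\,\cdot+\lambda)/(\lambda+l-1)$ with $\cdot\in\{p,m,n\}$, yielding the $(p,l)$, $(m,l)$, $(n,l)$ relations. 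Finally the piecewise rule ${\sf F}_{\frac12(p+m+n-l\pm1),p,m,n}^\lambda\in\{{\sf D},{\sf E}\}$ is inherited from the case split in Theorem~\ref{prodthreegegt}: the subscript $\frac12(p+m+n-l\pm1)$ is $\le p-1$, hence gives ${\sf D}$, precisely when $l$ lies in the upper range $m+n-p+2\le l\le m+n+p$, and is $\ge p$, hence gives ${\sf E}$, precisely when $l\le m+n-p$.

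The routine but fiddly part will be exactly this denominator bookkeeping in the three $l$-relations — confirming that $A_l$, $C_l$ and the norm ratios $h_{l\pm1}^\lambda/h_l^\lambda$ conspire to strip off the $(l+1)$ and $(\lambda+l)$ factors and leave the stated coefficients — together with tracking parities carefully near the interface $l=m+n-p+1$, where (because $p+m+n-l$ is forced odd) the two subscripts $\frac12(p+m+n-l\pm1)$ straddle the boundary $k=p-1,\,k=p$ and hence land in the two different cases ${\sf D}$ and ${\sf E}$ of the dichotomy, as well as handling the degenerate terms (a factor $C_{-1}$ or a shifted degree $-1$) that vanish identically. Beyond that the verification is a direct substitution, entirely parallel to the three-Gegenbauer computation \eqref{3gcont1}--\eqref{3gcont3}.
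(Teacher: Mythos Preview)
Your proposal is correct and follows essentially the same route as the paper: apply Theorem~\ref{THMAAA} with the Gegenbauer data \eqref{recurGeg} to the integral of four Gegenbauer polynomials, obtain the $\binom{4}{2}=6$ contiguous relations (three of which are \eqref{3gcont1}--\eqref{3gcont3} and three new ones involving $l$), then convert from ${\sf C}(p,m,n,l;\lambda)$ to ${\sf F}^\lambda_{\frac12(p+m+n-l),p,m,n}$ via \eqref{intexprcoefsfF} and Theorem~\ref{prodthreegegt}. Your write-up is in fact more explicit than the paper's own proof about the norm-ratio bookkeeping in the three $l$-relations and about the ${\sf D}/{\sf E}$ boundary, which the paper leaves implicit.
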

\begin{proof}
{Aside from} the contiguous relations  \eqref{3gcont1}--\eqref{3gcont3}, there are three more
\begin{eqnarray}
\label{Gegc1}
& \hspace{-0.7cm}  (l + \lambda)(p+1) {\sf C}_1^{+} +(p + 2 \lambda -1)(l + \lambda) {\sf C}_1^{-} = ( p + \lambda)(l+1) {\sf C}_4^{+} + (p+ \lambda)(l+2\lambda -1){\sf C}_4^{-} , \\
\label{Gegc2}
&  \hspace{-0.7cm}  (l + \lambda)(m+1) {\sf C}_2^{+} +(m + 2 \lambda -1)(l + \lambda) {\sf C}_2^{-} = ( m + \lambda)(l+1) {\sf C}_4^{+} + (m+ \lambda)(l+2\lambda -1){\sf C}_4^{-},   \\
\label{Gegc3}
& \hspace{-0.7cm}   (l + \lambda)(n+1) {\sf C}_3^{+} +(n + 2 \lambda -1)(l + \lambda) {\sf C}_3^{-} = ( n + \lambda)(l+1) {\sf C}_4^{+} + (n+ \lambda)(l+2\lambda -1){\sf C}_4^{-},
\end{eqnarray}
because we now have an
integral of a 
product of four orthogonal polynomials and therefore there will be 
$\binom{4}{2}=6$
contiguous relations.
The parameters $p$, $m$, $n$ and $l$ are associated with the subscripts $1$, $2$, $3$ and $4$ respectively, and $l = p+m+n-2k$. Applying this to the six contiguous relations 
\eqref{3gcont1}--\eqref{3gcont3}, \eqref{Gegc1}--\eqref{Gegc3}, and using Theorem \ref{prodthreegegt} completes the proof.
\end{proof}
\section{Hermite polynomials}
The Hermite polynomials can be defined  as \cite[(9.15.1)]{Koekoeketal}
\begin{equation}
H_n(x) = (2x)^n \pFq{2}{0}{-\tfrac12 n,-\tfrac12(n-1)}{-}{-\frac{1}{x^2}},
\label{ortho:herm}
\end{equation}
where $n\in\N_0$, $x\in\C$.
Hermite polynomials are orthogonal on $x\in(-\infty,\infty)$, with orthogonality relation
\cite[(9.15.2)]{Koekoeketal}
\begin{equation}
\int_{-\infty}^\infty H_m(x)H_n(x) \,\expe^{-x^2}\,\dd x
=\sqrt{\pi}\,2^n n!\delta_{m,n}
=:h_n\delta_{m,n}.
\label{orthogherm}
\end{equation}
The recurrence relation (\ref{recur}) for Hermite polynomials is given through
\begin{equation}
A_n=2,\quad B_n=0,\quad C_n=2n.
\label{recurherm}
\end{equation}
The definite integral of a product of $N\in\N_0$, $N\ge 2$, Hermite polynomials is defined by
\[
{\sf H}({\bf n}):=\int_{-\infty}^\infty H_{n_1}(x)\cdots H_{n_{N+1}}(x)\,\expe^{-x^2}\,\dd x,
\]
where ${\bf n}:=\{n_1,\ldots,n_{N+1}\}$.
Note that ${\sf H}({\bf n})$ has a generating function given by \cite[Exercise 4.11]{Ismail:2009:CQO}
\begin{equation}
\exp\left(2\sum_{1\le i< j\le k}t_it_j\right)
=\frac{1}{\sqrt{\pi}}\sum_{n_1,\ldots,n_{k}=0}^\infty \frac{t_1^{n_1}\cdots t_{k}^{n_{k}}}
{n_1!\cdots n_{k}!}{\sf H}({\bf n}).
\label{multhermgenfun}
\end{equation}
\subsection{{Linearization of a product of two Hermite polynomials}}
The Hermite polynomials (\ref{ortho:herm}) 
have a linearization formula given by \cite[(18.18.22)]{NIST:DLMF}
\begin{equation}
H_m (x) H_n (x) = \sum^{m}_{k=0} {\sf  b}_{k,m,n} 
H_{m+n-2k} (x),  
\label{lin:herm}
\end{equation}
where $m,n \in \mathbb{N}_0$, without loss of generality $n\geq m$, and 
\begin{equation*}
{\sf  b}_{k,m,n} := \frac{2^km!n!}{k!(m-k)!(n-k)!}.
\end{equation*}
The definite integral corresponding to the linearization of a product of two Hermite polynomials 
is given by
\begin{equation}
{\sf H}({p,m,n}):=\int_{-\infty}^\infty H_p(x) H_m(x) H_n(x) \,\expe^{-x^2} \,\dd x.
\label{threintherm}
\end{equation}
Using (\ref{lin:herm}) and orthogonality (\ref{orthogherm}), we see that ${\sf H}(p,m,n)$ is given by 
\begin{equation}
\hspace{-0.1cm}{\sf H}(p,m,n)\!=\!\left\{ 
\begin{array}{llc}
0, \quad&\mbox{if}
\ \, p\!>\!n\!+\!m\ \mbox{or}
\ \, m\!>\!n\!+\!p\ \mbox{or}
\ \, n\!>\!m\!+\!p &\\
&
\mbox{or} \ ((p+m+n)\!\!\!\!\mod 2) = 1, &\\
{\displaystyle \frac{\sqrt{\pi}\,m!n!p!2^{\lfloor\frac{p+m+n}{2}\rfloor}}
{
\lfloor\frac{m+n-p}{2}\rfloor !
\lfloor\frac{m+p-n}{2}\rfloor !
\lfloor\frac{n+p-m}{2}\rfloor !
}}, \quad&\mbox{otherwise}.&
\end{array}
\right.
\label{defintherm3}
\end{equation}

\begin{cor}
Let $|t_1|, |t_2|, |t_3|<1$. Then ${\sf H}(p,m,n)$ has the following multilinear 
generating function (\ref{multhermgenfun}), given by
\[
\exp(t_1t_2+t_1t_3+t_2t_3)=\sum_{n,m=0}^\infty
\sum_{p=0}^{m\wedge n} \frac
{2^{p+m\vee n} 
t_1^{2p+|n-m|} 
t_2^mt_3^n 
}
{
(m\!\wedge\!n\!-\!p)!\,
(p\!+\!|n-m|)!\,
p!}.
\]
\end{cor}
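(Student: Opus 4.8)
The plan is to read the asserted identity as the three-variable instance of the multilinear generating function \eqref{multhermgenfun}, with the triple integral ${\sf H}(p,m,n)$ replaced by its closed-form evaluation \eqref{defintherm3}.

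First I would set $k=3$ in \eqref{multhermgenfun}; after rescaling the $t_i$ so that the quadratic form in the exponent is normalized to $t_1t_2+t_1t_3+t_2t_3$ (carrying the resulting power of $2$ along), the left-hand side of the corollary appears, and the right-hand side is a triple power series whose coefficient of $t_1^{P}t_2^{m}t_3^{n}$ is a fixed multiple of ${\sf H}(P,m,n)/(P!\,m!\,n!)$. Into this I would insert \eqref{defintherm3}: its vanishing clauses annihilate every term with $P+m+n$ odd or with one of $P,m,n$ exceeding the sum of the other two, so only terms with $P\equiv m+n\pmod 2$ and $|m-n|\le P\le m+n$ survive; on those the floor symbols may be dropped because the relevant arguments are integers, and the $\sqrt\pi$ of \eqref{defintherm3} cancels the $1/\sqrt\pi$ of \eqref{multhermgenfun}.

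The combinatorial core is the reindexing. For fixed $m,n$ the admissible exponents of $t_1$ are exactly $|m-n|,|m-n|+2,\dots,m+n$, so I would write $P=2p+|m-n|$ with $p$ running from $0$ to $m\wedge n$; the assignment $(p,m,n)\mapsto(2p+|m-n|,m,n)$ is a bijection between $\{(p,m,n):m,n\ge 0,\ 0\le p\le m\wedge n\}$ and the non-vanishing locus of ${\sf H}$. Using only $\min(m,n)+\max(m,n)=m+n$ and $\max(m,n)-\min(m,n)=|m-n|$, one checks that
\[
\frac{m+n-P}{2}=m\wedge n-p,\qquad \Big\{\frac{m+P-n}{2},\frac{n+P-m}{2}\Big\}=\{\,p+|n-m|,\ p\,\},\qquad \frac{P+m+n}{2}=p+m\vee n,
\]
which converts each summand into exactly $2^{\,p+m\vee n}\,t_1^{\,2p+|n-m|}\,t_2^{m}\,t_3^{n}\big/\big((m\wedge n-p)!\,(p+|n-m|)!\,p!\big)$. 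Summing over $m,n\ge 0$ and $0\le p\le m\wedge n$ then reproduces the displayed right-hand side. The hypothesis $|t_1|,|t_2|,|t_3|<1$ is needed only to make the rearrangement of the triple series manifestly legitimate (the left-hand side being entire, the identity in fact holds for all $t_i$).

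I expect the only genuine obstacle to be this bookkeeping: confirming that the substitution $P=2p+|m-n|$ matches the summation ranges on the two sides, that the three floor-function arguments of \eqref{defintherm3} collapse to the symmetric quantities $m\wedge n$, $m\vee n$ and $|n-m|$, and that the overall power of $2$ (together with the constant picked up when normalizing the $t_i$) comes out precisely as displayed. A self-contained alternative that avoids \eqref{multhermgenfun} is to expand the exponential of the (suitably normalized) quadratic form as the product $e^{c\,t_1t_2}e^{c\,t_1t_3}e^{c\,t_2t_3}$ of three exponential series, read off the coefficient of $t_1^{P}t_2^{m}t_3^{n}$ as a single multinomial term, and then perform the identical reindexing; this route also re-derives the $k=3$ case of \eqref{multhermgenfun} in passing.
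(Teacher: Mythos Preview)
Your approach is exactly the paper's: specialize \eqref{multhermgenfun} to $k=3$, substitute the closed form \eqref{defintherm3}, use the vanishing conditions to restrict the $t_1$-exponent to $|n-m|\le P\le n+m$ with $P\equiv m+n\pmod 2$, and reindex via $P=2p+|n-m|$. The paper's proof is just a terse version of what you wrote.

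One caution about your rescaling step. The generating function \eqref{multhermgenfun} already has $\exp\bigl(2\sum t_it_j\bigr)$ on the left, and if you carry \eqref{defintherm3} through without rescaling you land precisely on the displayed right-hand side, since $\lfloor(P+m+n)/2\rfloor=p+m\vee n$ produces the factor $2^{\,p+m\vee n}$. If you instead rescale $t_i\mapsto t_i/\sqrt 2$ to force the left-hand side to $\exp(t_1t_2+t_1t_3+t_2t_3)$ as printed, you pick up an extra $2^{-(P+m+n)/2}=2^{-(p+m\vee n)}$ on the right, which kills the power of $2$ and no longer matches the stated sum. In other words, the printed corollary is off by that factor of $2$ in the exponent; the paper's own proof does no rescaling and tacitly proves the version with $\exp\bigl(2(t_1t_2+t_1t_3+t_2t_3)\bigr)$. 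Your instinct that ``the overall power of $2$\dots comes out precisely as displayed'' is the place where this would have surfaced had you pushed the arithmetic through.
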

\begin{proof}
Starting with (\ref{multhermgenfun}), (\ref{defintherm3}), for a fixed $m,n\in\N_0$,
$p$ is non-vanishing for $|n-m|\le p\le n+m$. Shifting the $p$-index by $|n-m|$
and scaling $p$ by a power of two to remove the remaining vanishing values of  
${\sf H}(p,n,m)$ completes the proof.
\end{proof}

\begin{rem}
\label{rem5.2}
Let $x \in \mathbb{C}$, $p,m,n\in \mathbb{N}_0$, and without loss of generality assume $m \le n$.  Further, let $p \in \{n-m,...,n+m\}$ and $p+m+n \pm 1$ even. 
Combining Theorem \ref{THMAAA} with the coefficients of the recurrence relation given by \eqref{recurherm} yields three contiguous relations for the integral of the product of three Hermite polynomials:
\begin{eqnarray}
&&\hspace{-8.7cm}{\sf H}^{+}_{2}-{\sf H}^{+}_{1}-2p{\sf H}^{-}_{1} +2m{\sf H}^{-}_{2}=0,\label{3herm1}\\
&&\hspace{-8.7cm}{\sf H}^{+}_{3}-{\sf H}^{+}_{1}-2p{\sf H}^{-}_{1} +2n{\sf H}^{-}_{3}=0,\label{3herm2}\\
&&\hspace{-8.7cm}{\sf H}^{+}_{3}-{\sf H}^{+}_{2}-2m{\sf H}^{-}_{2} +2n{\sf H}^{-}_{3}=0. \label{3herm3}
\end{eqnarray}
 Using \eqref{defintherm3} with \eqref{3herm1} the following identity is obtained:
\begin{eqnarray}
    &&\hspace{-2.3cm}(m+1) \left\lfloor \frac{m+n-p-1}{2} \right\rfloor ! \left\lfloor \frac{n+p-m+1}{2} \right\rfloor ! \left\lfloor \frac{m+p-n-1}{2} \right\rfloor ! \nonumber\\
    &&\hspace{-1.3cm} -(p+1) \left\lfloor \frac{m+n-p+1}{2} \right\rfloor !\left\lfloor \frac{n+p-m-1}{2} \right\rfloor ! \left\lfloor \frac{m+p-n-1}{2} \right\rfloor ! \nonumber\\
    &&\hspace{-1.3cm}  - \left\lfloor \frac{m+p-n+1}{2} \right\rfloor ! \left\lfloor \frac{m+n-p-1}{2} \right\rfloor ! \left\lfloor \frac{n+p-m+1}{2} \right\rfloor ! \nonumber\\
    && \hspace{-1.3cm} + \left\lfloor \frac{m+n-p+1}{2} \right\rfloor ! \left\lfloor \frac{m+p-n+1}{2} \right\rfloor ! \left\lfloor \frac{n+p-m-1}{2} \right\rfloor ! =0.
\end{eqnarray}
This can be rearranged to the following
\begin{eqnarray}
&&\hspace{1cm}\left( \frac{n+p-m+1}{2} \left( (m+1) - \frac{m+p-n+1}{2} \right) + \frac{m+n-p+1}{2} \left( -(p+1) + \frac{m+p-n+1}{2} \right) \right) \nonumber \\
&&\hspace{1.5cm} \times  \left \lfloor \frac{m+n-p-1}{2} \right \rfloor ! \left \lfloor \frac{n+p-m-1}{2} \right \rfloor ! \left \lfloor \frac{m+p-n-1}{2} \right \rfloor ! = 0.
\end{eqnarray}
Since the coefficient multiplying the factorials vanishes, we can see that the identity is trivially satisfied.
So this is a clear validation of the contiguous relations 
implied by Theorem \ref{THMAAA}.
Note that similar validations can be obtained by using \eqref{3herm2}, \eqref{3herm3}, which we leave to the reader.
\end{rem}

\subsection{{Linearization of a product of t}hree Hermite polynomials}
\begin{thm}
\label{thm5.3}
Let $p,m,n\in\N_0$ and without loss of generality $p\le m\le n$, 
$x\in\C$.
Then 
\begin{equation}
H_p(x)
H_m(x)
H_n(x)
=
\sum_{k=0}^{\lfloor\frac{p+m+n}{2}\rfloor}
{\sf  f}_{k,p,m,n} H_{p+m+n-2k}(x),
\label{prodthreeherm}
\end{equation}
where
\begin{equation}
{\sf  f}_{k,p,m,n}:=
\left\{
\begin{array}{lll}
{\sf  d}_{k,p,m,n}, \quad&\mbox{if}&\quad 0\le k \le p-1,\\[0.1cm]
{\sf  e}_{k,p,m,n}, \quad&\mbox{if}&\quad p\le k \le \lfloor \frac{p+m+n}{2}\rfloor,
\end{array}
\right.
\end{equation}
\begin{equation}
{\sf  d}_{k,p,m,n}\!:=\!
\frac{
(m\!+\!n)!p!2^k
}{
k!(p-k)!(m\!+\!n-k)!
}
\hyp{4}{3}{
{-k},
{-m},
{-n},
{-m\!-\!n\!+\!k},
}{
1\!+\!p\!-\!k,
\!\frac{\topt{0}{1}-m-n}{2}
}{\frac14}\!,
\end{equation}
\begin{eqnarray}
&&\hspace{-1.5cm}
{\sf  e}_{k,p,m,n}\!:=\!
\frac{
m!n!
(m\!+\!n\!+\!2p\!-\!2k)!2^k
}{
(k\!-\!p)!(m\!+\!p\!-\!k)!(n\!+\!p\!-\!k)!
(p\!+\!m\!+\!n\!-\!2k)!
}\nonumber\\
&&\hspace{2cm}\times
\hyp{4}{3}{
{-p},{-m\!-\!p\!+\!k},{-n\!-\!p\!+\!k},{-m\!-\!n\!-\!p\!+\!2k}\!,
}{
1\!+\!k\!-\!p,
\!\frac{\topt{0}{1}+2k-2p-m-n}{2}}{\frac14}\!.
\end{eqnarray}
\end{thm}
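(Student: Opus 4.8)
The plan is to mimic exactly the strategy used for the three Gegenbauer case in Theorem~\ref{prodthreegegt}, but now with the much simpler Hermite linearization coefficients. First I would start from the integral representation of the coefficients analogous to~\eqref{intexprcoefsfF}: using the orthogonality relation~\eqref{orthogherm} on~\eqref{prodthreeherm} gives
\[
{\sf f}_{k,p,m,n}=\frac{1}{h_{m+n+p-2k}}\int_{-\infty}^\infty H_p(x)H_m(x)H_n(x)H_{p+m+n-2k}(x)\,\expe^{-x^2}\,\dd x
=\frac{{\sf H}(p,m,n,p+m+n-2k)}{h_{m+n+p-2k}},
\]
and then reduce the four-fold Hermite integral to a single sum. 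Concretely, apply~\eqref{lin:herm} to the product $H_m(x)H_n(x)=\sum_{l=0}^{m}{\sf b}_{l,m,n}H_{m+n-2l}(x)$, so that the integrand becomes a sum of products of three Hermite polynomials, each of which is evaluated by~\eqref{defintherm3} (equivalently by~\eqref{threintherm}). This yields
\[
{\sf f}_{k,p,m,n}=\sum_{l} {\sf b}_{l,m,n}\,{\sf b}_{k-l,p,m+n-2l},
\]
where the summation index $l$ runs over those values for which both binomial-type coefficients are non-vanishing; as in the Gegenbauer computation the constraint $0\le k-l$ together with $l\le m$ splits the range of $k$ into $0\le k\le p-1$ (giving ${\sf d}_{k,p,m,n}$) and $p\le k\le\lfloor\frac{p+m+n}{2}\rfloor$ (giving ${\sf e}_{k,p,m,n}$), with lower summation limit $l=\max(0,k-p)$.

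Next I would carry out the resummation in closed form. In the first region ($0\le k\le p-1$) the index $l$ runs from $0$ to $m$ and, writing ${\sf b}_{l,m,n}=2^l m!n!/(l!(m-l)!(n-l)!)$ and similarly for the second factor, one collects the $l$-dependence into a ratio of Pochhammer symbols; the standard identities $(m-l)!=(-1)^l m!/(-m)_l$, $(a+l)!=a!\,(a+1)_l$, and the duplication-type rearrangement of $(m+n-2l-\cdots)!$ into two Pochhammer symbols with argument halved, convert $\sum_l$ into a terminating ${}_4F_3$ evaluated at $\tfrac14$ (the $\tfrac14$ arising from the two factors of $2^{\,\cdot}$ against a $4^{\,\cdot}$ from the duplication). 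This produces ${\sf d}_{k,p,m,n}$ exactly as stated. The second region ($p\le k$) is the same manipulation after the index shift $l\mapsto l+k-p$, producing ${\sf e}_{k,p,m,n}$; here the prefactor $(m+n+2p-2k)!/(p+m+n-2k)!$ and the numerator parameters $-p,-m-p+k,-n-p+k,-m-n-p+2k$ appear naturally from the shifted factorials.

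The only genuine obstacle is the bookkeeping in the duplication step: one must carefully track which factorial $(m+n-2l)!$ or $(m+n+2p-2k-2l)!$ gets split via $\bigl(\tfrac{N}{2}\bigr)_{?}\bigl(\tfrac{N+1}{2}\bigr)_{?}$ so that the bottom parameter of the ${}_4F_3$ is the pair $\tfrac{\{0,1\}-m-n}{2}$ (resp.\ $\tfrac{\{0,1\}+2k-2p-m-n}{2}$) and no spurious factors survive; getting the sign of the argument ($+\tfrac14$, not $-\tfrac14$) and the exact power of $2$ in the prefactor ($2^k$) right is where an error would most likely creep in. A convenient internal check, which I would state as a remark rather than belabor, is that setting $p=0$ collapses the first sum (empty range $0\le k\le-1$) and forces the ${}_4F_3$ in ${\sf e}$ to be unity, recovering~\eqref{lin:herm}; one may also verify the $k=0$ term against the known value ${\sf f}_{0,p,m,n}=2^0 p!m!n!\cdot(\text{multinomial})$ from~\eqref{defintherm3}. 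With the two regions identified and the ${}_4F_3$ closed forms verified, factoring the products of linearization coefficients in terms of Pochhammer symbols and factorials completes the proof, exactly as in Theorem~\ref{prodthreegegt}.
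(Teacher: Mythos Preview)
Your approach is correct and will produce the stated formulas, but it differs from the paper's proof. The paper does \emph{not} repeat the Gegenbauer argument with Hermite ingredients; instead it obtains Theorem~\ref{thm5.3} as a limiting case of the already-proved Theorem~\ref{prodthreegegt}, via the relation
\[
H_n(x)=n!\lim_{\lambda\to\infty}\lambda^{-n/2}C_n^\lambda\!\left(\frac{x}{\sqrt{\lambda}}\right),
\]
applied to each of the four Gegenbauer polynomials in~\eqref{prodthreegeg}, together with the asymptotics of ratios of gamma functions (\cite[(5.11.12)]{NIST:DLMF}) to collapse the ${}_{11}F_{10}$ coefficients ${\sf D}_{k,p,m,n}^\lambda$, ${\sf E}_{k,p,m,n}^\lambda$ down to the ${}_4F_3(\tfrac14)$ forms ${\sf d}_{k,p,m,n}$, ${\sf e}_{k,p,m,n}$. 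This is considerably shorter once Theorem~\ref{prodthreegegt} is in hand, since the region-splitting and the structure of the sums are inherited automatically. Your route, by contrast, is self-contained: it proves the Hermite result without ever invoking Gegenbauer polynomials, and the reduction to a ${}_4F_3$ (rather than a ${}_{11}F_{10}$) is transparent from the simplicity of ${\sf b}_{k,m,n}$, with the $\tfrac14$ and the half-integer bottom parameters arising exactly as you describe from the duplication $(-m-n)_{2l}=4^l\bigl(\tfrac{-m-n}{2}\bigr)_l\bigl(\tfrac{1-m-n}{2}\bigr)_l$. Both arguments are valid; the paper's buys brevity, yours buys independence from the Gegenbauer machinery.
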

\begin{proof}
Using
(cf.~\cite[(9.8.34)]{Koekoeketal})
\[
H_n(x)=n! \lim_{\alpha\to\infty}\alpha^{-\frac{n}{2}}C_n^{\alpha}\left(\frac{x}{\sqrt{\alpha}}\right)
\]
four times in (\ref{prodthreegeg}), and \cite[(5.11.12)]{NIST:DLMF}, the result follows.
\end{proof}

\begin{rem}
Setting $p=0$ in (\ref{prodthreeherm}) straightforwardly produces (\ref{lin:herm}) 
since the first sum vanishes and the ${}_{4}F_{3}(\tfrac14)$ in the second sum is unity.
\end{rem}
\noindent Define the following definite integral of a product of four Hermite polynomials 
\[
{\sf H}(k,p,m,n):=
\int_{%
-\infty
}^\infty 
H_k(x)
H_p(x)
H_m(x)
H_n(x)
\,\expe^{-x^2}\,\dd x.
\]
Using orthogonality and the linearization relation
\eqref{prodthreeherm} one produces the following
corollary.
\begin{cor}
Let $k,p,m,n\in\N_0$. Then
\begin{eqnarray} \label{int4Herm}
\hspace{-2.3cm}{\sf H}(k,p,m,n)=
\left\{
\begin{array}{ll}
{\sf  E}_{k,p,m,n}, \quad&\mbox{if}\quad 0\le k \le m+n-p,\\[0.1cm]
{\sf  D}_{k,p,m,n}, \quad&\mbox{if}\quad m+n-p+2\le k \le m+n+p,\\[0.1cm]
0, \quad&\mbox{otherwise},
\end{array}
\right.
\end{eqnarray}
where 
\begin{eqnarray*}
&&\hspace{-0.3cm}{\sf  E}_{k,p,m,n}\!:=\!
\frac{\sqrt{\pi}\,
m!n!(p\!+\!k)!2^{\lfloor\frac{k+p+m+n}{2}\rfloor}
}{
\lfloor\frac{m+n-p-k}{2}\rfloor!
\lfloor\frac{k+p+n-m}{2}\rfloor!
\lfloor\frac{k+p+m-n}{2}\rfloor!
}
\hyp{4}{3}{
-k,
-p,
\lfloor\frac{n-m-p-k}{2}\rfloor,
\lfloor\frac{m-n-p-k}{2}\rfloor
}{
1\!+\lfloor\frac{m+n-p-k}{2}\rfloor,
\!\frac{\topt{0}{1}-k-p}{2}
}{\frac14}\!,
\end{eqnarray*}
\begin{eqnarray*}
&&\hspace{-0.2cm}{\sf  D}_{k,p,m,n}\!:=\!
\frac{\sqrt{\pi}\,
k!p!
(m\!+\!n)!2^{\lfloor\frac{p+k+m+n}{2}\rfloor}
}{
\lfloor\frac{k+p-m-n}{2}\rfloor!
\lfloor\frac{m+n+p-k}{2}\rfloor!
\lfloor\frac{m+n+k-p}{2}\rfloor!
}
\hyp{4}{3}{
-m,-n,
\lfloor\frac{p-k-m-n}{2}\rfloor,
\lfloor\frac{k-p-m-n}{2}\rfloor
}{
1\!+\lfloor\frac{p+k-m-n}{2}\rfloor,
\!\frac{\topt{0}{1}-m-n}{2}}{\frac14}\!.
\end{eqnarray*}
\end{cor}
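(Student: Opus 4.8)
The plan is to mimic the derivation of Theorem~\ref{prodthreegegt}: express the four-fold Hermite integral ${\sf H}(k,p,m,n)$ as a single sum of three-fold Hermite integrals by linearizing one product, then recognize the resulting sum as a terminating ${}_4F_3$. First I would apply the linearization relation \eqref{lin:herm} to the product $H_m(x)H_n(x)$ (assuming without loss of generality $m \le n$), writing $H_m H_n = \sum_{l=0}^{m} {\sf b}_{l,m,n} H_{m+n-2l}$, and substitute into the integrand. By orthogonality \eqref{orthogherm}, the integral $\int_{-\infty}^\infty H_k H_p H_{m+n-2l}\,\expe^{-x^2}\,\dd x$ is just ${\sf H}(k,p,m+n-2l)$, which is evaluated explicitly in \eqref{defintherm3}. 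Hence ${\sf H}(k,p,m,n)=\sum_{l=0}^m {\sf b}_{l,m,n}\,{\sf H}(k,p,m+n-2l)$. Alternatively — and this is essentially what the statement of the corollary announces — one could invoke Theorem~\ref{thm5.3} directly: $H_p H_m H_n = \sum_{j} {\sf f}_{j,p,m,n} H_{p+m+n-2j}$, multiply by $H_k \expe^{-x^2}$ and integrate, so that by orthogonality only the term with $p+m+n-2j=k$ survives, giving ${\sf H}(k,p,m,n) = h_k\, {\sf f}_{\frac12(p+m+n-k),p,m,n}$ whenever $k$ has the right parity and lies in range, and $0$ otherwise. Using $h_k = \sqrt{\pi}\,2^k k!$ and the piecewise definition of ${\sf f}$ then produces the two cases ${\sf E}$ and ${\sf D}$.

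The key steps, in order, are: (i) record that $\int H_k H_p H_m H_n \expe^{-x^2}$ picks out a single linearization coefficient of $H_p H_m H_n$ against $H_k$; (ii) determine the index $j=\frac12(p+m+n-k)$ and translate the constraint $0\le j\le p-1$ versus $p\le j\le\lfloor\frac{p+m+n}{2}\rfloor$ from Theorem~\ref{thm5.3} into the stated $k$-ranges $m+n-p+2\le k\le m+n+p$ (the ${\sf D}$-region) and $0\le k\le m+n-p$ (the ${\sf E}$-region), together with the parity condition $k\equiv p+m+n \pmod 2$ and the vanishing outside $|m-n|$-type triangle inequalities; (iii) substitute $k=p+m+n-2j$ into the closed forms for ${\sf d}_{j,p,m,n}$ and ${\sf e}_{j,p,m,n}$ and simplify the Pochhammer/factorial prefactors, multiplying by $h_k=\sqrt\pi\,2^k k!$, to obtain the displayed ${\sf E}_{k,p,m,n}$ and ${\sf D}_{k,p,m,n}$ with their ${}_4F_3(\tfrac14)$ factors; and (iv) verify that the symmetry of ${\sf H}(k,p,m,n)$ under permutation of $\{k,p,m,n\}$ is consistent with the fact that the formula was derived by singling out the role of one index (here $k$), so that any admissible ordering gives the same value.

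The main obstacle will be step (iii): the bookkeeping of converting the $(\lambda)_{\bullet}$ and factorial ratios in ${\sf d}$ and ${\sf e}$ — which are written in terms of the summation index $k$ of Theorem~\ref{thm5.3} — into the floor-function expressions $\lfloor\frac{m+n-p-k}{2}\rfloor!$, $\lfloor\frac{k+p+n-m}{2}\rfloor!$, etc., and matching the upper parameters of the ${}_4F_3$. Because $j=\frac12(p+m+n-k)$ forces certain half-integer combinations to be integers exactly on the non-vanishing parity class, one must be careful that the floors are harmless (they only serve to make the formula formally valid on the vanishing parity class, where ${\sf H}=0$ anyway). The substitution $-k\mapsto -k$, $-m-n+k \mapsto \lfloor\frac{n-m-p-k}{2}\rfloor$ inside the hypergeometric argument, and the analogous relabeling for the ${\sf D}$-case, is purely mechanical once one tracks that $m+n-k = 2j$ in the ${\sf e}$-case and $p+m+n-k=2j$ in the ${\sf d}$-case. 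The remaining content — the vanishing outside the triangle inequalities and for wrong parity — is immediate from \eqref{defintherm3} applied to the reduced three-fold integrals, or directly from orthogonality, so no genuine difficulty arises there.
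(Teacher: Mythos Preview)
Your proposal is correct and follows essentially the same route as the paper: invoke Theorem~\ref{thm5.3}, integrate against $H_k(x)\,\expe^{-x^2}$, and use orthogonality \eqref{orthogherm} to isolate ${\sf H}(k,p,m,n)=h_k\,{\sf f}_{\frac12(p+m+n-k),p,m,n}$, then split into the ${\sf D}$/${\sf E}$ cases according to the range of $j=\tfrac12(p+m+n-k)$. The paper's proof stops at that identity and does not spell out your step~(iii) bookkeeping, so your discussion of the factorial/floor rewriting is more detailed than what the paper actually records.
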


\begin{proof}
Start with Theorem 
\ref{thm5.3}
and then integrate 
both sides using 
the property of orthogonality for Hermite polynomials
\eqref{orthogherm}.
This leads to
the following integral
for the linearization coefficients, namely
\begin{equation}
{\sf  f}_{k,p,m,n} = \frac{1}{h_{m+n+p-2k}} \int_{-\infty}^{\infty} H_p(x)H_m(x)H_n(x)H_{p+m+n-2k}(x) \,\expe^{-x^2} \dd x.
\end{equation}
This allows us to write the integral of a product of four Hermite polynomials as
\begin{equation}
{\sf H}(p,m,n,l)= h_{l}\, {\sf  f}_{\frac12 (p+m+n-l),p,m,n}. 
\end{equation}
\end{proof}

\begin{thm}
 Let $p,m,n\in\N_0$ and without loss of generality 
 $p \le m \le n$, $k \in \{0,...,p+m+n\}$.
Then the contiguous relations for the
integral for a product of four Hermite polynomials are given by
\begin{eqnarray}
&&\hspace{-3cm}{\sf  F}_{k,p,m+1,n} - {\sf  F}_{k,p+1,m,n} + 2m {\sf  F}_{k,p,m-1,n} - 2p {\sf  F}_{k,p-1,m,n} =0, \\
&&\hspace{-3cm}{\sf  F}_{k,p,m,n+1} - {\sf  F}_{k,p+1,m,n} + 2n {\sf  F}_{k,p,m,n-1} - 2p {\sf  F}_{k,p-1,m,n} =0, \\
&&\hspace{-3cm}{\sf  F}_{k+1,p,m,n} - {\sf  F}_{k,p+1,m,n} + 2k {\sf  F}_{k-1,p,m,n} - 2p {\sf  F}_{k,p-1,m,n} =0, \\
&&\hspace{-3cm}{\sf  F}_{k,p,m,n+1} - {\sf  F}_{k,p,m+1,n} + 2n {\sf  F}_{k,p,m,n-1} - 2m {\sf  F}_{k,p,m-1,n} =0, \\
&&\hspace{-3cm}{\sf  F}_{k+1,p,m,n} - {\sf  F}_{k,p,m+1,n} + 2k {\sf  F}_{k-1,p,m,n} - 2m {\sf  F}_{k,p,m-1,n} =0, \\
&&\hspace{-3cm}{\sf  F}_{k+1,p,m+1,n} - {\sf  F}_{k,p,m,n+1} + 2k {\sf  F}_{k-1,p,m,n} - 2n {\sf  F}_{k,p,m,n-1} =0, 
\end{eqnarray}
where
\begin{eqnarray}
{\sf  F}_{k,p,m,n} = 
\left\{
\begin{array}{ll}
{\sf  E}_{k,p,m,n}, \quad&\mbox{if}\quad 0\le k \le m+n-p,\\[0.1cm]
{\sf  D}_{k,p,m,n}, \quad&\mbox{if}\quad m+n-p+2\le k \le m+n+p,\\[0.1cm]
0, \quad&\mbox{otherwise},
\end{array}
\right.
\end{eqnarray}
\end{thm}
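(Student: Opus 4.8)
The plan is to obtain these six relations as the specialization of Theorem \ref{THMAAA} to the case $N+1=4$ of a product of four Hermite polynomials, and then to rewrite the result in the notation ${\sf F}_{k,p,m,n}$. First I would substitute the Hermite recurrence coefficients $A_n=2$, $B_n=0$, $C_n=2n$ from \eqref{recurherm} into the five-term contiguous relation of Theorem \ref{THMAAA}: the coefficient $B_{n_j}A_{n_k}-A_{n_j}B_{n_k}$ multiplying ${\sf P}$ vanishes, and after cancelling the common factor $2$ one is left with
\[
{\sf P}^{+}_{k}-{\sf P}^{+}_{j}+2n_k{\sf P}^{-}_{k}-2n_j{\sf P}^{-}_{j}=0
\]
for any two of the four degrees. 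Since ${\sf P}^{\pm}_{j}$ denotes the integral with the $j$-th degree shifted by $\pm1$, letting the pair $(j,k)$ run over the $\binom{4}{2}=6$ unordered pairs of the degrees $\{k,p,m,n\}$ produces precisely the six displayed relations, with the index raised or lowered in ${\sf F}$ matching the degree raised or lowered in the integrand.

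Next I would identify the definite integral ${\sf H}(k,p,m,n)$ of a product of four Hermite polynomials with the piecewise quantity ${\sf F}_{k,p,m,n}$. This is the content of the preceding corollary: integrating the linearization formula of Theorem \ref{thm5.3} against the orthogonality measure \eqref{orthogherm} yields \eqref{int4Herm}, so that ${\sf H}(k,p,m,n)$ equals ${\sf E}_{k,p,m,n}$ for $0\le k\le m+n-p$, equals ${\sf D}_{k,p,m,n}$ for $m+n-p+2\le k\le m+n+p$, and vanishes otherwise, the vanishing being forced by the triangle and parity conditions inherited from \eqref{defintherm3}. Substituting this identification into the six relations above then gives the theorem as stated.

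The step I expect to require the most care — though it is bookkeeping rather than a genuine obstacle, since everything reduces to Theorem \ref{THMAAA} — is checking branch consistency: within a single relation the various shifted integrals ${\sf F}_{k\pm1,\ldots}$, ${\sf F}_{\ldots,p\pm1,\ldots}$, etc., need not all lie in the same branch of \eqref{int4Herm}, because a $\pm1$ shift can carry an index across the boundary $k=m+n-p+1$ separating the ${\sf E}$- and ${\sf D}$-regions, or onto a value where the integral vanishes by parity. One therefore has to verify, using the floor functions and the parity constraint that $k+p+m+n$ be even, that the three cases of \eqref{int4Herm} glue together so that each displayed identity holds throughout the stated parameter range. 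This is the same kind of verification already carried out for the three-Hermite integral in Remark \ref{rem5.2} and for the four-Gegenbauer integral, and it introduces no new ideas; alternatively, one could verify each relation directly from the explicit ${}_4F_3(\tfrac14)$ representations of ${\sf E}_{k,p,m,n}$ and ${\sf D}_{k,p,m,n}$, or from the multilinear generating function \eqref{multhermgenfun}.
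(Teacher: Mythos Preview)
Your proposal is correct and follows essentially the same route as the paper: specialize Theorem \ref{THMAAA} with the Hermite data \eqref{recurherm} to the $N+1=4$ case (yielding the six four-term relations for ${\sf H}$), then invoke the preceding corollary \eqref{int4Herm} to rewrite everything in terms of ${\sf F}_{k,p,m,n}$. Your treatment is in fact more explicit than the paper's own proof, which simply cites Remark \ref{rem5.2} together with the three additional relations involving the fourth index and then combines with \eqref{int4Herm}; the branch-consistency discussion you flag is extra care the paper does not spell out.
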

\begin{proof}
In addition to the contiguous functions shown in Remark \ref{rem5.2}, there are three additional ones:
\begin{eqnarray}
&&\hspace{-7.5cm}{\sf H}^{+}_{2}-{\sf H}^{+}_{1}-2p{\sf H}^{-}_{1} +2m{\sf H}^{-}_{2}=0,\\
&&\hspace{-7.5cm}{\sf H}^{+}_{3}-{\sf H}^{+}_{1}-2p{\sf H}^{-}_{1} +2n{\sf H}^{-}_{3}=0,\\
&&\hspace{-7.5cm}{\sf H}^{+}_{3}-{\sf H}^{+}_{2}-2m{\sf H}^{-}_{2} +2n{\sf H}^{-}_{3}=0.
\end{eqnarray}
Combining these with \eqref{int4Herm}, the contiguous relations of the theorem are obtained.
\end{proof}

\section{Jacobi polynomials}

The Jacobi polynomials can be defined  as \cite[(18.5.7)]{NIST:DLMF}
\begin{equation}
    P_n^{(\alpha,\beta)} = \frac{(\alpha+1)_n}{n!} \pFq{2}{1}{-n,n+\alpha+\beta+1}{\alpha +1}{\frac{1-x}{2}}.
\end{equation}
where $n \in \mathbb{N}_0$, $\alpha, \beta > -1$, $x \in \mathbb{C}$.  Jacobi polynomials are orthogonal on $x \in (-1,1),$ with the orthogonality relation \cite[(18.3)]{NIST:DLMF}
\begin{equation} \label{jacobi:ortho}
    \int_{-1}^1 P_m^{(\alpha,\beta)}(x) P_n^{(\alpha,\beta)}(x) (1-x)^\alpha (1+x)^\beta = \frac{2^{\alpha + \beta +1}\Gamma(n+\alpha+1)\Gamma(n+\beta+1)}{(2n+\alpha+\beta+1)\Gamma(n+\alpha+\beta+1)n!} \delta_{n,m} =: h_n \delta_{n,m}.
\end{equation}
The coefficients for the 
three-term recurrence relation for the Jacobi 
polynomials are given by  \cite[(18.9.2)]{NIST:DLMF}
\begin{eqnarray}\label{jacobi:3}
&&\hspace{-6.0cm}A_n := \frac{(2n + \alpha + \beta +1)(2n+ \alpha + \beta +2)}{2(n+1)(n + \alpha + \beta+1)}, \\
&&\hspace{-6.0cm}B_n := \frac{(\alpha^2 - \beta^2)(2n + \alpha + \beta+1)}{2(n+1)(n + \alpha + \beta +1)(2n + \alpha + \beta)}, \\
&&\hspace{-6.0cm}C_n := \frac{(n+\alpha)(n+\beta)(2n + \alpha +\beta+2)}{(n+1)(n+\alpha+\beta+1)(2n + \alpha+\beta)}.
\end{eqnarray}

\subsection{Linearization of a product
of two Jacobi polynomials}

The linearization formula for Jacobi polynomials was given in a 
fundamental work by Rahman \cite[(1.9)]{Rahman81a}. Rahman expressed the 
linearization coefficient in terms of a ${}_9F_8(1)$. His result was 
given in terms of some other variables $s=n-m$, $j=k-n+m$, and 
also contained a typographical error.
The typographical error was that the 
the term $(2s - 2n - \alpha - \beta)$ in Rahman's original 
publication should have been
written as $(2s - 2n - \alpha - \beta)_j$.  
({\it Note that in \cite[(2.1.1)]{Janssenetal}, 
it was realized that Rahman's result contained a typographical error.})
The corrected
and further simplified version of Rahman's result is  given as follows.

\begin{thm}\label{Rahman}{Rahman (1981).}
Let $m, n\in\N_0$ and without loss of generality, $n\ge m$, $\alpha,\beta\in\CC$.
Then
\begin{equation}
P_m^{(\alpha,\beta)}(x)
P_n^{(\alpha,\beta)}(x)
=\sum_{k=0}^{2m}
{\sf a}_{k,m,n}^{\alpha,\beta}
P_{k+n-m}^{(\alpha,\beta)}(x),
\label{lin:jac}
\end{equation}
where
\begin{eqnarray}
&&\hspace{-0.6cm}{\sf a}_{k,n,m}^{\alpha,\beta}:=
\frac{
(\alpha+1,\beta+1)_n(\alpha+\beta+1)_{2n-2m}(\alpha+\beta+1)_{2m}
(\alpha+\beta+1+2n-2m+2k)
}
{
m!(\alpha+\beta+1)_m(\alpha+1,\beta+1)_{n-m}(\alpha+\beta+2)_{2n}
(\alpha+\beta+1)
}
\nonumber\\
&&\hspace{-0.2cm}\times\frac{
(n-m+1,\alpha+\beta+2n-2m+1,2\alpha+2\beta+2n+2,-2m,
\alpha-\beta)_k
}
{k!(2\beta+2n-2m+2,\alpha+n-m+1,\alpha+\beta+2n+2,-\alpha-\beta-2m)_k
} \nonumber\\
&&\hspace{-0.2cm}\times\hyp98
{
\beta\!+\!n\!-\!m\!+\!\frac12,
\frac{\beta+n-m+\frac52}{2},
\beta\!+\!\frac12,
\beta\!+\!n\!+\!1,
-\alpha\!-\!m,
\frac{\alpha+\beta+k+\topt{1}{2}}{2}+n-m,
\frac{-k+\topt{0}{1}}{2}
}
{
\frac{\beta+n-m+\frac12}{2},
n\!-\!m\!+\!1,
\frac12\!-\!m,
\alpha\!+\!\beta\!+\!n\!+\!\frac{3}{2},
\frac{\beta-\alpha-k+\topt{1}{2}}{2},
\frac{k+\topt{2}{3}}{2}+\beta+n-m
}
{1}\!\!.
\end{eqnarray}
\end{thm}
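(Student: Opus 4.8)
The plan is to reconstruct Rahman's formula directly from the integral representation of the linearization coefficient, carrying the typographical correction through the computation explicitly, rather than merely quoting \cite{Rahman81a} and \cite{Janssenetal}. First, following the reduction of Section~3 (cf.~\eqref{fa}), the coefficient is a normalized triple integral: with $\ell:=k+n-m$,
\[
{\sf a}_{k,n,m}^{\alpha,\beta}=\frac{1}{h_\ell}\int_{-1}^1 P_m^{(\alpha,\beta)}(x)\,P_n^{(\alpha,\beta)}(x)\,P_\ell^{(\alpha,\beta)}(x)\,(1-x)^\alpha(1+x)^\beta\,\dd x ,
\]
so everything reduces to evaluating this integral. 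I would expand the lowest-degree factor $P_m^{(\alpha,\beta)}$ by its terminating ${}_2F_1$, which replaces the triple integral by a finite sum over an index $i\in\{0,\dots,m\}$ of the two-polynomial integrals $\int_{-1}^1(1-x)^{\alpha+i}(1+x)^\beta P_n^{(\alpha,\beta)}(x)P_\ell^{(\alpha,\beta)}(x)\,\dd x$; passing to Rahman's variables $s=n-m$, $j=k-n+m$ at the end streamlines the bookkeeping.

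Next I would evaluate that inner family of integrals. Expanding a second Jacobi factor by its terminating ${}_2F_1$ reduces each inner integral to a single-polynomial integral $\int_{-1}^1(1-x)^{\rho}(1+x)^\beta P_n^{(\alpha,\beta)}(x)\,\dd x$, which is a one-term evaluation because the associated ${}_3F_2(1)$ is balanced and sums by the Pfaff--Saalsch\"utz theorem. What remains is a double sum over the two expansion indices; interchanging the order of summation and performing one further balanced (Chu--Vandermonde-type) inner summation collapses it to a single sum in $k$. This is where the correction bites: the typo-corrected factor $(2s-2n-\alpha-\beta)_j$, rather than Rahman's scalar $(2s-2n-\alpha-\beta)$, is exactly what makes that penultimate inner sum balanced, hence summable. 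The alternation symbols $\topt{0}{1}$, $\topt{1}{2}$, $\topt{2}{3}$ in the stated answer are the hallmark of a quadratic transformation applied to the surviving sum; carrying out that transformation repackages the series as the very-well-poised ${}_9F_8(1)$ written above, whose very-well-poised pair appears as the ratio of $\tfrac12\!\left(\beta+n-m+\tfrac52\right)$ over $\tfrac12\!\left(\beta+n-m+\tfrac12\right)$. Equivalently, one may take Rahman's original ${}_9F_8$, correct the misprint, and reach the simplified form by a single application of Bailey's transformation of a terminating very-well-poised ${}_9F_8$; the combinatorial content is the same.

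The hard part is that last step: verifying that all of the parameters and the very-well-poised pairing come out exactly as claimed after the quadratic transformation, and that the contributions of the two nested reductions recombine with no stray factors. One must also confirm analyticity for general $\alpha,\beta\in\CC$ away from the orthogonality support, so that the beta-integral manipulations --- valid a priori only for $\mathrm{Re}\,\alpha,\mathrm{Re}\,\beta>-1$ --- extend by continuation, and check the summation range $0\le k\le 2m$ against the vanishing of the ${}_9F_8(1)$, equivalently against $|n-m|\le\ell\le n+m$. Everything preceding that step is routine termination-and-beta-integral bookkeeping. As sanity checks I would specialize $\alpha=\beta=\lambda-\tfrac12$ and compare with~\eqref{geg2} (up to the $C^\lambda_n$-versus-$P^{(\lambda-1/2,\lambda-1/2)}_n$ normalization), and set $m=0$, where the formula must collapse to ${\sf a}_{k,n,0}^{\alpha,\beta}=\delta_{k,0}$.
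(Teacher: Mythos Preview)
The paper does not actually prove this statement: its entire proof is a citation to Rahman \cite[(1.9)]{Rahman81a} together with a pointer to \cite[(3.3)]{ReportOpenOPSFA13} for the typographical correction. Your proposal is therefore not a different route to the same proof --- it is a proof where the paper offers none. That said, your outline is broadly sound through the middle: the single-polynomial integral $\int_{-1}^1(1-x)^{\rho}(1+x)^\beta P_n^{(\alpha,\beta)}(x)\,\dd x$ does indeed yield a balanced ${}_3F_2(1)$ that sums by Pfaff--Saalsch\"utz (one checks $(\alpha+1)+(\rho+\beta+2)=1+(-n)+(n+\alpha+\beta+1)+(\rho+1)$), so the reduction to a double sum is legitimate.

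Two caveats. First, a notational slip: you say the double sum ``collapses to a single sum in $k$,'' but $k$ is already the fixed linearization index; the surviving summation variable is an internal index, and that remaining sum is precisely what should become the terminating ${}_9F_8(1)$. Second, your claim that the corrected factor $(2s-2n-\alpha-\beta)_j$ is ``exactly what makes that penultimate inner sum balanced'' reads backwards: the misprint in \cite{Rahman81a} is in the \emph{stated} coefficient, not in a step of the derivation, so the correction does not rescue a summation --- it rescues the final formula. The genuinely hard part you flag (matching all nine numerator and eight denominator parameters and the very-well-poised pairing after the quadratic/Bailey transformation) is real and is essentially the entire content of Rahman's paper; your sketch does not yet establish that this step goes through without extraneous factors. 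If you want a self-contained proof, that is where the work lies; if you only need the theorem for the contiguous-relation application that follows, citing Rahman with the correction (as the paper does) is the pragmatic choice.
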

\begin{proof}
See Rahman (1981) 
\cite[(1.9)]{Rahman81a} and 
\cite[(3.3)]{ReportOpenOPSFA13}
for description of correction.
\end{proof}

\begin{cor} \label{int3jacob}
Let $l,m,n \in \mathbb{N}_0$ and without loss of generality 
$m \le n$ and $l \in \{n-m,...,n+m\}$
using the linearization formula for Jacobi polynomials, it is possible to write the integral of the product of three Jacobi polynomials as follows:
\begin{eqnarray}
{\sf P}(l,m,n;\alpha,\beta) := \int_{-1}^1 P^{(\alpha, \beta)}_m(x) P^{(\alpha, \beta)}_n(x) P^{(\alpha, \beta)}_{l}(x) (1-x)^{\alpha} (1+x)^{\beta} \dd x = {\sf a}_{l+m-n,n,m}^{\alpha,\beta} h_{l},
\end{eqnarray}
where $h_l$ is defined in \eqref{jacobi:ortho}.
\end{cor}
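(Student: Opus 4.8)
The plan is to obtain this as the $N=2$ specialization of the general relation \eqref{fa}, with the abstract linearization coefficient ${\sf a}_{\bm}$ replaced by Rahman's explicit formula from Theorem \ref{Rahman}. First I would start from \eqref{lin:jac},
\[
P_m^{(\alpha,\beta)}(x)\,P_n^{(\alpha,\beta)}(x)=\sum_{k=0}^{2m}{\sf a}_{k,n,m}^{\alpha,\beta}\,P_{k+n-m}^{(\alpha,\beta)}(x),
\]
multiply both sides by $P_l^{(\alpha,\beta)}(x)\,(1-x)^\alpha(1+x)^\beta$, and integrate over $x\in(-1,1)$; since the sum is finite, the integral passes through it termwise.

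Next I would apply the orthogonality relation \eqref{jacobi:ortho}: each term contributes $\int_{-1}^1 P_{k+n-m}^{(\alpha,\beta)}(x)\,P_l^{(\alpha,\beta)}(x)\,(1-x)^\alpha(1+x)^\beta\,\dd x = h_l\,\delta_{k+n-m,\,l}$, so the sum collapses to the unique surviving term with $k+n-m=l$, i.e.\ $k=l+m-n$. This yields ${\sf P}(l,m,n;\alpha,\beta)= h_l\,{\sf a}_{l+m-n,n,m}^{\alpha,\beta}$, which is the claimed identity.

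The only point requiring care is the index bookkeeping that guarantees the surviving term is actually present and well defined. The hypothesis $m\le n$ gives $l\ge n-m\ge 0$, so $P_l^{(\alpha,\beta)}$ is a genuine member of the orthogonal family; and the hypothesis $l\in\{n-m,\dots,n+m\}$ is exactly the inequality $0\le l+m-n\le 2m$, which places the required index $k=l+m-n$ inside the summation range of \eqref{lin:jac}. (For $l$ outside this window the integral vanishes by the same orthogonality, in accordance with the remark following \eqref{fa}.) There is no genuine analytic obstacle here; the main thing to watch is the slight discrepancy in the order of the subscripts of ${\sf a}$ between \eqref{lin:jac} and the definition appearing in Theorem \ref{Rahman}, which must be tracked carefully so that the extracted coefficient is written consistently as ${\sf a}_{l+m-n,n,m}^{\alpha,\beta}$.
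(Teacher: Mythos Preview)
Your proof is correct and follows exactly the approach the paper intends: the corollary is stated without a separate proof block precisely because it is the $N=2$ instance of the general identity \eqref{fa}, combined with Rahman's linearization formula from Theorem~\ref{Rahman}. Your explicit execution---insert \eqref{lin:jac}, integrate against $P_l^{(\alpha,\beta)}$ with the Jacobi weight, apply orthogonality \eqref{jacobi:ortho}, and check that $k=l+m-n$ lies in $\{0,\dots,2m\}$---is the intended argument, and your remark about tracking the subscript order of ${\sf a}$ is a useful caution.
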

\begin{thm}
Let $l,m,n \in \mathbb{N}_0$ and without loss of generality 
$m \le n$ and $l \in \{n-m,...,n+m\}$
\begin{eqnarray}
&&   \frac{ 2 (\alpha  - \beta) (\alpha + \beta)(m-n)(\alpha + \beta +m+n +1)(\alpha + \beta + 2n+1)}{(\alpha + \beta + 2m)(\alpha + \beta + 2n)} {\sf a}_{l+m-n,n,m}^{\alpha,\beta}  \nonumber \\
&&  \hspace{0.4cm}  + \frac{(\alpha + \beta +m+1)(m+1)(\alpha + \beta+2n+1)(\alpha + \beta + 2n + 2)}{(\alpha+\beta+2m+1)} {\sf a}_{l+m-n+1,n,m+1}^{\alpha,\beta} \nonumber \\
&& \hspace{0.4cm}   + \frac{(\alpha + m)(\beta +m)(\alpha + \beta + 2m+2)(\alpha + \beta +2n+1)(\alpha + \beta + 2n+2)}{(\alpha + \beta + 2m)(\alpha+\beta+2m+1)}
   {\sf a}_{l+m-n-1,n,m-1}^{\alpha,\beta} \nonumber \\
&& \hspace{0.4cm}   - (\alpha + \beta+2m+2)(\alpha+\beta +n+1)(n+1) {\sf a}_{l+m-n-1,n+1,m}^{\alpha,\beta} \nonumber \\
&& \hspace{0.4cm}   - \frac{(\alpha + \beta + 2m+2)(\alpha +n)(\beta+n)(\alpha + \beta +2n+2)}{(\alpha+\beta+2n)}{\sf a}_{l+m-n+1,n-1,m}^{\alpha,\beta} =0,
\end{eqnarray}
\begin{eqnarray}
&& \hspace{-2.4cm} \frac{ 2 (\alpha  - \beta) (\alpha + \beta)( l-n)(\alpha + \beta + n+l +1)}{(\alpha + \beta + 2 l)(\alpha + \beta + 2n)} 
{\sf a}_{l+m-n,n,m}^{\alpha,\beta}  
\nonumber \\
&& \hspace{-2.4cm} \hspace{0.4cm} - \frac{(\alpha + \beta + 2l + 2)(\alpha + \beta+n+1)(n+1)}{(\alpha + \beta + 2 n+1)} 
{\sf a}_{l+m-n-1,n+1,m}^{\alpha,\beta}  
\nonumber \\ 
&& \hspace{-2.4cm} \hspace{0.4cm}  - \frac{(\alpha +  n)(\beta + n)(\alpha + \beta + 2 l+2)(\alpha + \beta + 2n+2)}{(\alpha + \beta + 2 n+1)(\alpha + \beta + 2 n)}
{\sf a}_{l+m-n+1,n-1,m}^{\alpha,\beta}   
\nonumber \\
&& \hspace{-2.4cm} \hspace{0.4cm}  + \frac{(\alpha + \beta+2 n+2)(\alpha +l+1)(\beta + l+1)(\alpha + \beta +2l+1)}{(\alpha + \beta +2l+3)} 
{\sf a}_{l+m-n+1,n,m}^{\alpha,\beta}   
\nonumber \\
&& \hspace{-2.4cm} \hspace{0.4cm} + 
\frac{l(\alpha + \beta +l)(\alpha + \beta + 2 l+2)(\alpha + \beta + 2 n+2)}
{(\alpha + \beta + 2l-1)(\alpha + \beta + 2l)}
{\sf a}_{l+m-n-1,n,m}^{\alpha,\beta}   
=0,
\end{eqnarray}
\begin{eqnarray}
&& \hspace{-5.0cm} \frac{ 2 (\alpha  - \beta) (\alpha + \beta)(l-m)(\alpha + \beta +m+l +1)}{(\alpha + \beta + 2l)(\alpha + \beta + 2l+2)(\alpha + \beta + 2m)} 
{\sf a}_{l+m-n,n,m}^{\alpha,\beta}   
\nonumber\\
&& \hspace{-5.0cm}\hspace{0.4cm}  - 
\frac{(m+1)(\alpha+\beta+m+1)}
{(\alpha+\beta+2m+1)}
{\sf a}_{l+m-n+1,n,m+1}^{\alpha,\beta}  
\nonumber\\ 
&&\hspace{-5.0cm}\hspace{0.4cm}  - \frac{(\alpha + m)(\beta +m)(\alpha + \beta + 2m+2)}{(\alpha + \beta + 2m )(\alpha +\beta + 2m + 1)}
{\sf a}_{l+m-n-1,n,m-1}^{\alpha,\beta}  
\nonumber\\
&& \hspace{-5.0cm}\hspace{0.4cm} + \frac{(\alpha +l+1)(\beta + l+1)(\alpha + \beta +2m+2)}{(\alpha + \beta +2l+2)(\alpha+\beta +2l+3)} 
{\sf a}_{l+m-n+1,n,m}^{\alpha,\beta}  
\nonumber\\
&& \hspace{-5.0cm}\hspace{0.4cm} + \frac{l(\alpha + \beta+l)(\alpha + \beta + 2m + 2)}{(\alpha + \beta +2l-1)(\alpha + \beta +2l)} 
{\sf a}_{l+m-n-1,n,m}^{\alpha,\beta}  
=0.
\end{eqnarray}
\end{thm}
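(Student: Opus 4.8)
The plan is to proceed exactly as in the proofs of the corresponding Gegenbauer and Hermite four-polynomial results: specialize Theorem~\ref{THMAAA} to the case $N+1=3$, where the integrand is the product of the three Jacobi polynomials $P_m^{(\alpha,\beta)}$, $P_n^{(\alpha,\beta)}$, $P_l^{(\alpha,\beta)}$ appearing in Corollary~\ref{int3jacob}. Since $\binom{3}{2}=3$, Theorem~\ref{THMAAA} produces three five-term contiguous relations for ${\sf P}(l,m,n;\alpha,\beta)$, one for each unordered pair of quantum numbers drawn from $\{l,m,n\}$; inspection of which indices are shifted in the three displayed identities shows that they correspond, in order, to the pairs $\{m,n\}$, $\{n,l\}$ and $\{m,l\}$.

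First I would write down, for each chosen pair $(n_j,n_k)$, the relation
\[
(B_{n_j}A_{n_k}-A_{n_j}B_{n_k}){\sf P}-A_{n_k}{\sf P}^{+}_{j}-C_{n_j}A_{n_k}{\sf P}^{-}_{j}+A_{n_j}{\sf P}^{+}_{k}+C_{n_k}A_{n_j}{\sf P}^{-}_{k}=0,
\]
and substitute the explicit Jacobi recurrence coefficients $A_n,B_n,C_n$ from \eqref{jacobi:3}. Unlike the Gegenbauer and Hermite cases, here $B_n\neq 0$, so the coefficient $B_{n_j}A_{n_k}-A_{n_j}B_{n_k}$ does not vanish; a short computation shows it is proportional to $(\alpha-\beta)(\alpha+\beta)$ times a rational function of the indices, which is the source of the leading ${\sf a}_{l+m-n,n,m}^{\alpha,\beta}$ term carrying the $(\alpha-\beta)(\alpha+\beta)$ prefactor in each of the three identities.

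Next I would translate every ${\sf P}$ and ${\sf P}^{\pm}_{j}$ back into linearization coefficients using Corollary~\ref{int3jacob}, i.e.\ ${\sf P}(l,m,n;\alpha,\beta)={\sf a}_{l+m-n,n,m}^{\alpha,\beta}\,h_l$ together with the effect of each unit shift on the index $l+m-n$ and on the normalization $h_l$. A shift of $m$ or $n$ leaves $h_l$ untouched, so in the first relation (the pair $\{m,n\}$) the common factor $h_l$ cancels and one is left purely with shifted coefficients ${\sf a}_{l+m-n\pm1,n\pm1,m}^{\alpha,\beta}$ and ${\sf a}_{l+m-n\pm1,n,m\pm1}^{\alpha,\beta}$; inserting \eqref{jacobi:3} and clearing denominators then yields the first displayed identity. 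A shift of $l$, on the other hand, produces the ratio $h_{l\pm1}/h_l$, which by \eqref{jacobi:ortho} is an explicit rational function of $\alpha,\beta,l$, and these ratios are exactly the rational factors in $\alpha,\beta,l$ multiplying the ${\sf a}_{l+m-n\pm1,n,m}^{\alpha,\beta}$ terms in the second and third identities. Throughout one works in the stated range $l\in\{n-m,\ldots,n+m\}$ (with $m\le n$), which guarantees that all the integrals ${\sf P}$, ${\sf P}^{\pm}_{j}$ that occur are the non-vanishing ones covered by Corollary~\ref{int3jacob}.

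The routine but bookkeeping-heavy part, and the main obstacle, is the simultaneous handling of the two features absent from the Gegenbauer and Hermite specializations: the nonvanishing $B_n$ term and the $h_{l\pm1}/h_l$ ratios that appear whenever a shift acts on the index $l$ rather than on $m$ or $n$. Matching these against the intricate rational coefficients in the three displayed relations, and keeping the sign conventions for the $\pm$ shifts consistent with the definition of ${\sf P}^{\pm}_{j}$, is where the care lies; none of it requires anything beyond elementary manipulation of the Pochhammer symbols in \eqref{jacobi:ortho} and \eqref{jacobi:3}.
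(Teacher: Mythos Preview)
Your proposal is correct and follows essentially the same route as the paper: apply Theorem~\ref{THMAAA} with the Jacobi recurrence coefficients \eqref{jacobi:3} to obtain the three five-term contiguous relations for ${\sf P}(l,m,n;\alpha,\beta)$, then convert each ${\sf P}$ and ${\sf P}^{\pm}_j$ to the linearization coefficients ${\sf a}^{\alpha,\beta}$ via Corollary~\ref{int3jacob}. Your additional remarks about the nonvanishing $B_n$ producing the $(\alpha-\beta)(\alpha+\beta)$ prefactor and about the $h_{l\pm1}/h_l$ ratios arising from shifts in $l$ are exactly the bookkeeping the paper leaves implicit.
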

\begin{proof}
To determine the contiguous relations, the coefficients of the three-term recurrence relation defined in \eqref{jacobi:3} are used. 
Combining these coefficients with the general form of the contiguous relation of Theorem \ref{THMAAA} results in 
\begin{eqnarray*}
&&\hspace{-0.5cm}  \frac{ (\alpha  - \beta) (\alpha + \beta)(\alpha + \beta + 2n_j)(n_j-n_k)(\alpha + \beta +n_j+n_k +1)(\alpha + \beta + 2n_j+1)}{(n_j+1)(\alpha + \beta +n_j+1)(\alpha + \beta + 2n_j)(n_k+1)(\alpha + \beta + n_k+1)(\alpha + \beta + 2n_k)} {\sf P}  \\
&& \hspace{0.4cm} + \frac{(\alpha + \beta+2n_k+1)(\alpha + \beta + 2n_k + 2)}{2 (n_k+1)(\alpha + \beta +n_k+1)} {\sf P}_{n_j}^{+} \\
&& \hspace{0.4cm}   + \frac{(\alpha + n_j)(\beta +n_j)(\alpha + \beta + 2n_j+2)(\alpha + \beta +2n_k+1)(\alpha + \beta + 2n_k+2)}{2(n_j+1)(\alpha + \beta + n_k+1)(\alpha + \beta + 2n_j)(n_k+1)(\alpha +\beta + n_k+1)}
  {\sf P}_{n_j}^{-} \\
&&  = \frac{(\alpha + \beta + 2n_j+1)(\alpha + \beta+2n_j+2)(\alpha +n_k)(\beta + n_k)(\alpha + \beta +2n_k+2)}{2(n_j+1)(\alpha + \beta +n_j+1)(n_k+1)(\alpha+\beta +n_k+1)(\alpha + \beta +2n_k)} {\sf P}_{n_k}^{+} \\
&& \hspace{0.4cm} + 
\frac{(\alpha + \beta +2n_j+1)
(\alpha + \beta + 2n_j+2)}
{2(n_j+1)(\alpha + \beta +n_j+1)}
{\sf P}_{n_k}^{-}.
\end{eqnarray*}
Then using Corollary \ref{int3jacob} with the definition of the coefficient ${\sf a}_{k,n,m}^{\alpha,\beta}$ in Theorem \ref{Rahman} completes the proof.
\end{proof}
\section{Laguerre polynomials}
The Laguerre polynomials can be defined , $x\in\C$, $\alpha>-1$, \cite[Section 9.12]{Koekoeketal}
\[
L_n^\alpha(x)=\frac{(\alpha+1)_n}{n!}\hyp11{-n}{\alpha+1}{x},
\]
with orthogonality relation
\[
\int_0^\infty L_m^\alpha(x)L_n^\alpha(x)x^\alpha \,\expe^{-x}\,\dd x=\frac{\Gamma(\alpha+1+n)}{n!}\delta_{m,n}
=: h_m \delta_{m,n}.
\]
For Laguerre polynomials, one has (\ref{recur}),
\begin{equation}
A_n:=\frac{-1}{n+1},\quad
B_n:=\frac{2n+\alpha+1}{n+1},\quad
C_n:=\frac{n+\alpha}{n+1}.
\label{Laginfo}
\end{equation}
Define $n_1,\ldots,n_{N+1}\in\N_0$, $\Re\alpha>-1$,
\begin{equation}
{\sf L}({\bf n};\alpha):=\int_0^\infty
L_{n_1}^\alpha(x)
\cdots
L_{n_{N+1}}^\alpha(x)
x^\alpha\expe^{-x} \,\dd x.
\label{multintlag}
\end{equation}
Using the generating function for Laguerre polynomials \cite[(9.12.10)]{Koekoeketal}
\[
(1-t)^{-\alpha-1}\exp\left(-\frac{xt}{1-t}\right)=\sum_{n=0}^\infty L_n^\alpha(x)t^n,
\]
one finds that (\ref{multintlag}) has the following generating function
\cite[(9.3.7)]{Ismail:2009:CQO}
\begin{equation}
\sum_{n_1,\ldots,n_{k}=0}^\infty {\sf L}({\bf n};\alpha) t_1^{n_1}\cdots t_{k}^{n_{k}}
=\Gamma(\alpha+1)\left((1-t_1)\cdots(1-t_{k})\right)^{-\alpha+1}
\left(1+\sum_{j=1}^{k}\frac{t_j}{1-t_j}\right)^{-\alpha-1}.
\label{multlaggenfun}
\end{equation}

\subsection{{Linearization of a product of two Laguerre polynomials}}
\begin{thm}
Let $x\in\C$, $\alpha\in \mathbb C$, $n,m \in \No$, 
and without loss
of generality $n\ge m$. Then
\begin{align*}
\hspace{-1cm}L_m^{\alpha} (x) L_n^{\alpha} (x) &= 
\frac{
2^{2m}(\tfrac12)_m(\alpha\!+\!1)_n 
}{
(n\!-\!m)!
}\\
& \quad \times\sum_{k=n-m}^{n+m} 
\frac{
(-1)^{k+n+m}k!\,
}{
(\alpha\!+\!1)_k(m\!+\!n\!-\!k)!(k\!-\!n\!+\!m)!
}
\pFq{3}{2}{-m\!-\!\alpha,\frac{n-m-k+\topt{0}{1}}{2}}{n\!-\!m\!+\!1,\frac12\!-\!m}{1}
L_{k}^{\alpha}(x) 
.
\label{eq:thrm:linlag}
\end{align*}
\label{thrm:linlag}
\end{thm}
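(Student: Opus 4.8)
The plan is to obtain Theorem~\ref{thrm:linlag} as a confluent ($\beta\to\infty$) limit of Rahman's Jacobi linearization formula \eqref{lin:jac}, using the classical limit relation $\lim_{\beta\to\infty}P_n^{(\alpha,\beta)}\!\left(1-2x/\beta\right)=L_n^\alpha(x)$ (cf.~\cite{Koekoeketal}), which is immediate from the hypergeometric representations since $(n+\alpha+\beta+1)_j(x/\beta)^j\to x^j$ as $\beta\to\infty$. After replacing $x$ by $1-2x/\beta$ throughout \eqref{lin:jac} and letting $\beta\to\infty$, the left-hand side tends to $L_m^\alpha(x)L_n^\alpha(x)$ and each $P_{k+n-m}^{(\alpha,\beta)}(1-2x/\beta)$ on the right tends to $L_{k+n-m}^\alpha(x)$; since every polynomial involved has degree bounded independently of $\beta$, the identity passes to the limit coefficientwise, so it only remains to evaluate $\lim_{\beta\to\infty}{\sf a}_{k,n,m}^{\alpha,\beta}$ for each fixed $k,m,n$.

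First I would reduce the ${}_9F_8(1)$ in ${\sf a}_{k,n,m}^{\alpha,\beta}$. It terminates because of its pair of numerator parameters $\tfrac12(-k+\topt{0}{1})$, so the number of summands is independent of $\beta$. Six of its nine numerator parameters grow like $\beta$ — namely $\beta+n-m+\tfrac12$, $\tfrac12(\beta+n-m+\tfrac52)$, $\beta+\tfrac12$, $\beta+n+1$ and the pair $\tfrac12(\alpha+\beta+k+\topt{1}{2})+n-m$ — with leading coefficients $1,\tfrac12,1,1,\tfrac12,\tfrac12$; likewise six of its eight denominator parameters — $\tfrac12(\beta+n-m+\tfrac12)$, $\alpha+\beta+n+\tfrac32$, the pair $\tfrac12(\beta-\alpha-k+\topt{1}{2})$ and the pair $\tfrac12(k+\topt{2}{3})+\beta+n-m$ — with leading coefficients $\tfrac12,1,\tfrac12,\tfrac12,1,1$. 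Since $1\cdot\tfrac12\cdot1\cdot1\cdot\tfrac12\cdot\tfrac12=\tfrac18=\tfrac12\cdot1\cdot\tfrac12\cdot\tfrac12\cdot1\cdot1$, the $\beta$-dependence of these twelve Pochhammer symbols cancels term-by-term in the summand, so in the limit the ${}_9F_8(1)$ collapses to $\pFq{3}{2}{-m-\alpha,\frac{-k+\topt{0}{1}}{2}}{n-m+1,\frac12-m}{1}$, built from the remaining bounded parameters.

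Next I would take the limit of the rational prefactor of the ${}_9F_8(1)$. A power count shows that its numerator and denominator are each asymptotic to a constant times $\beta^{3n+3k+1}$; moreover the factor $2^k$ produced by $(2\alpha+2\beta+2n+2)_k$ matches the one from $(2\beta+2n-2m+2)_k$, and the sign $(-1)^k$ from $(\alpha-\beta)_k$ matches the one from $(-\alpha-\beta-2m)_k$, so these cancel, and $\lim_{\beta\to\infty}{\sf a}_{k,n,m}^{\alpha,\beta}$ equals $\frac{(\alpha+1)_n\,(n-m+1)_k\,(-2m)_k}{m!\,(\alpha+1)_{n-m}\,k!\,(\alpha+n-m+1)_k}$ times the ${}_3F_2(1)$ above. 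Finally I would reindex by degree: Rahman's summation variable $k\in\{0,\dots,2m\}$ labels the polynomial $P_{k+n-m}^{(\alpha,\beta)}$, so setting the degree $\ell=k+n-m\in\{n-m,\dots,n+m\}$ and substituting $k=\ell-n+m$, and using $(n-m+1)_{\ell-n+m}=\ell!/(n-m)!$, $(\alpha+n-m+1)_{\ell-n+m}=(\alpha+1)_\ell/(\alpha+1)_{n-m}$, $(-2m)_{\ell-n+m}=(-1)^{\ell-n+m}(2m)!/(m+n-\ell)!$, together with $(2m)!/m!=2^{2m}(\tfrac12)_m$ and $(-1)^{\ell-n+m}=(-1)^{\ell+n+m}$, one arrives at precisely the coefficient of $L_\ell^\alpha(x)$ stated in the theorem.

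The main obstacle is the bookkeeping of the third step: one must confirm that the powers of $\beta$ in the numerator and denominator of the prefactor agree exactly (so the limit is finite and nonzero) and that every residual power of $2$ and every sign cancels, and then execute the degree-reindexing carefully — in particular treating $(-2m)_{\ell-n+m}$ as a terminating Pochhammer symbol of a negative integer and tracking the resulting sign. The ${}_9F_8\to{}_3F_2$ reduction is the conceptual heart of the argument but, once the matching of leading coefficients in the very-well-poised series is noticed, it is essentially mechanical.
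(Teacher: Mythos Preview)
Your proposal is correct and follows essentially the same route as the paper: substitute $x\mapsto 1-2x/\beta$ in Rahman's Jacobi linearization \eqref{lin:jac}, apply the limit $P_n^{(\alpha,\beta)}(1-2x/\beta)\to L_n^\alpha(x)$, and evaluate $\lim_{\beta\to\infty}{\sf a}_{k,n,m}^{\alpha,\beta}$ via the asymptotics $(c\beta+b)_j/\beta^j\to c^j$. The paper's proof records only this outline, whereas you have carried out the detailed bookkeeping (the ${}_9F_8\!\to{}_3F_2$ collapse, the $\beta^{3n+3k+1}$ power count, and the degree reindexing) explicitly; that extra care is entirely consistent with, and a faithful expansion of, the paper's argument.
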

\begin{proof}
Let $x= 1-2x\beta^{-1}$ in (\ref{lin:jac}). Applying the relevant limiting relation \cite[(9.8.16)]{Koekoeketal} 
\begin{equation}
\lim_{\beta \to \infty} P_n^{(\alpha,\beta)} (1-2x\beta^{-1}) = L_n^{\alpha} (x),
\label{inter:jaclag}
\end{equation}
produces
\begin{equation*}
\lim_{\beta\to\infty} P_m^{(\alpha, \beta)} (1-2x\beta^{-1}) P_n^{(\alpha, \beta)} (1-2x\beta^{-1}) = \lim_{\beta\to\infty} \sum_{k=n-m}^{n+m}P_{k}^{(\alpha,\beta)}(1-2x\beta^{-1}) {\sf a}_{k,m,n}^{\alpha, \beta}.
\end{equation*}
One also has the following useful asymptotic result.
Let $k\in \No$, $b\in\C$, $c\in\C\setminus\{0\}$. Then
\begin{equation}
\lim_{a\to\infty} \frac{(ca+b)_k}{a^k} = c^k.
\label{lem:limpoch}
\end{equation}
Using (\ref{lem:limpoch}) to evaluate the limit on the right-hand side 
completes the proof. 
\end{proof}
Define $p,m,n\in\N_0$, $\alpha>-1$,
\[
{\sf L}_{p,m,n}^\alpha:=\int_0^\infty
L_p^\alpha(x)
L_m^\alpha(x)
L_n^\alpha(x)
x^\alpha\expe^{-x} \,\dd x,
\]
and using (\ref{fa}), (\ref{Laginfo}), we have for $|n-m|\le p\le m+n$,
\begin{align} \label{3LasF}
\hspace{-0.7cm}{\sf L}_{p,m,n}^{\alpha}
=
\frac{\Gamma(\alpha\!+\!1\!+\!m\!\vee\!n)(-1)^{p+n+m}2^{2(m\wedge n)}(\tfrac12)_{m\wedge n}}
{|n\!-\!m|!(m\!+\!n\!-\!p)!(p\!-\!|n-m|)!}
\pFq{3}{2}{-\alpha\!-\!m\!\wedge\!n,\frac{|n-m|-p+\topt{0}{1}}{2}
}{1\!+\!|n\!-\!m|,\frac12\!-\!m\!\wedge\!n}{1}.
\end{align}
Hence we have the following generating function (\ref{multlaggenfun})
\begin{align}
&\hspace{-0.6cm}\sum_{n,m=0}^\infty
\frac{ t_m^mt_n^n
2^{2(m\wedge n)}(\tfrac12)_{m\wedge n}(\alpha+1)_{m\vee n}
}
{|n-m|!}
\sum_{p=0}^{2(m\wedge n)}
\frac{(-1)^pt_p^{p+|n-m|}}
{(2(m\!\wedge\!n)-p)!\,p!}\pFq{3}{2}{-\alpha\!-\!m\!\wedge\!n,\frac{-p+\topt{0}{1}}{2}
}{1\!+\!|n\!-\!m|,\frac12\!-\!m\!\wedge\!n}{1}\nonumber\\
&=\left((1\!-\!t_p)(1\!-\!t_m)(1\!-\!t_n)\!+\!t_p(1\!-\!t_m)(1\!-\!t_n)\!+\!t_m(1\!-\!t_p)(1\!-\!t_n)\!+\!t_n(1\!-\!t_p)(1\!-\!t_m)\right)^{-\alpha-1}. 
\end{align}


\medskip
\begin{thm}
Let $p,m,n \in \mathbb{N}_0, a >1$ such that $|n-m| \le p \le m+n$. Then, 
one has three contiguous relations for the product of the three 
Laguerre polynomials given by
\small
\begin{align}
& \hspace{-0.9cm}
\frac{
(m\!+\!\alpha)(m\!+\!n\!-\!p)
\Gamma(\alpha\!+\!1\!+\!(m\!-\!1)\!\vee\!n)2^{2((m-1)\wedge n)}(\tfrac12)_{(m-1)\wedge n}}
{|n\!-\!m+1|!(p\!-\!|n\!-\!m\!+\!1|)!}
\pFq{3}{2}{-\alpha\!-\!(m\!-\!1)\!\wedge\!n,\frac{|n-m+1|-p+\topt{0}{1}}{2}
}{1\!+\!|n\!-\!m\!+\!1|,\frac12\!-\!(m\!+\!1)\!\wedge\!n}{1}\nonumber\\
& \hspace{-0.4cm}
-\frac{
(p\!+\!1)(m\!+\!n\!-\!p)
\Gamma(\alpha\!+\!1\!+\!m\!\vee\!n)2^{2(m\wedge n)}(\tfrac12)_{m\wedge n}}
{|n\!-\!m|!(p\!-\!|n-m|+1)!}
\pFq{3}{2}
{-\alpha\!-\!m\!\wedge\!n,\frac{|n-m|-p-1+\topt{0}{1}}{2}
}{1\!+\!|n\!-\!m|,\frac12\!-\!m\!\wedge\!n}{1}\nonumber\\
& \hspace{-0.4cm} +
\frac{
(m\!+\!1)
\Gamma(\alpha\!+\!1\!+\!(m\!+\!1)\!\vee\!n)2^{2((m+1)\wedge n)}(\tfrac12)_{(m+1)\wedge n}}
{
(m\!+\!n\!-\!p\!+\!1)
|n\!-\!m-1|!
(p\!-\!|n\!-\!m\!-\!1|)!}\pFq{3}{2}{-\alpha\!-\!(m\!+\!1)\!\wedge\!n,\frac{|n-m-1|-p+\topt{0}{1}}{2}
}{1\!+\!|n\!-\!m\!-\!1|,\frac12\!-\!(m\!+\!1)\!\wedge\!n}{1}\nonumber \\
& \hspace{-0.4cm} -
\frac{
(p\!+\!\alpha)
\Gamma(\alpha\!+\!1\!+\!m\!\vee\!n)2^{2(m\wedge n)}(\tfrac12)_{m\wedge n}}
{
(m\!+\!n\!-\!p\!+\!1)
|n\!-\!m|!
(p\!-\!|n\!-\!m\!-\!1|)!}\pFq{3}{2}{-\alpha\!-\!m\!\wedge\!n,\frac{|n-m|-p\!+\!1+\topt{0}{1}}{2}
}{1\!+\!|n\!-\!m|,\frac12\!-\!m\!\wedge\!n}{1}\nonumber\\
& \hspace{-0.4cm} -
\frac{
2(p\!-\!m)
\Gamma(\alpha\!+\!1\!+\!m\!\vee\!n)2^{2(m\wedge n)}(\tfrac12)_{m\wedge n}}
{|n\!-\!m|!(p\!-\!|n-m|)!}\pFq{3}{2}{-\alpha\!-\!m\!\wedge\!n,\frac{|n-m|-p+\topt{0}{1}}{2}
}{1\!+\!|n\!-\!m|,\frac12\!-\!m\!\wedge\!n}{1}=0,
\end{align}
\begin{align}
&\hspace{-0.9cm}
\frac{
(n\!+\!\alpha)(m\!+\!n\!-\!p)
\Gamma(\alpha\!+\!1\!+\!m\!\vee\!(n\!-\!1))4^{m\wedge(n-1)}(\tfrac12)_{m\wedge(n-1)}}
{|n\!-\!m-1|!(p\!-\!|n\!-\!m\!-\!1|)!}
\pFq{3}{2}{-\alpha\!-\!m\!\wedge\!(n\!-\!1),\frac{|n-m-1|-p+\topt{0}{1}}{2}
}{1\!+\!|n\!-\!m\!-\!1|,\frac12\!-\!m\!\wedge\!(n\!-\!1)}{1}\nonumber\\
&\hspace{-0.4cm} -\frac{
(p\!+\!1)(m\!+\!n\!-\!p)
\Gamma(\alpha\!+\!1\!+\!m\!\vee\!n)2^{2(m\wedge n)}(\tfrac12)_{m\wedge n}}
{|n\!-\!m|!(p\!-\!|n-m|+1)!}\pFq{3}{2}{-\alpha\!-\!m\!\wedge\!n,\frac{|n-m|-p-1+\topt{0}{1}}{2}
}{1\!+\!|n\!-\!m|,\frac12\!-\!m\!\wedge\!n}{1}\nonumber\\
&\hspace{-0.4cm}+
\frac{
}{
}
\frac{
(n\!+\!1)
\Gamma(\alpha\!+\!1\!+\!m\!\vee\!(n\!+\!1))2^{2(m\wedge(n+1))}(\tfrac12)_{m\wedge(n+1)}}
{
(m\!+\!n\!-\!p)
|n\!-\!m+1|!(p\!-\!|n\!-\!m\!+\!1|)!}\pFq{3}{2}{-\alpha\!-\!m\!\wedge\!(n\!+\!1),\frac{|n-m+1|-p+\topt{0}{1}}{2}
}{1\!+\!|n\!-\!m\!+\!1|,\frac12\!-\!m\!\wedge\!(n\!+\!1)}{1}\nonumber\\
&\hspace{-0.4cm}-
\frac{
(p\!+\!\alpha)
\Gamma(\alpha\!+\!1\!+\!m\!\vee\!n)2^{2(m\wedge n)}(\tfrac12)_{m\wedge n}}
{
(m\!+\!n\!-\!p)
|n\!-\!m|!(p\!-\!|n-m|+1)!}\pFq{3}{2}{-\alpha\!-\!m\!\wedge\!n,\frac{|n-m|-p+1+\topt{0}{1}}{2}
}{1\!+\!|n\!-\!m|,\frac12\!-\!m\!\wedge\!n}{1}\nonumber\\
&\hspace{-0.4cm}-
\frac{
2(p\!-\!n)
\Gamma(\alpha\!+\!1\!+\!m\!\vee\!n)2^{2(m\wedge n)}(\tfrac12)_{m\wedge n}}
{|n\!-\!m|!(p\!-\!|n-m|)!}\pFq{3}{2}{-\alpha\!-\!m\!\wedge\!n,\frac{|n-m|-p+\topt{0}{1}}{2}
}{1\!+\!|n\!-\!m|,\frac12\!-\!m\!\wedge\!n}{1}=0,
\end{align}
\begin{align}
& \hspace{-1cm}
\frac{
(n\!+\!\alpha)(m\!+\!n\!-\!p)
\Gamma(\alpha\!+\!1\!+\!(m\!-\!1)\!\vee\!n)4^{((m-1)\wedge n)}(\tfrac12)_{(m-1)\wedge n}}
{|n\!-\!m+1|!(p\!-\!|n\!-\!m\!+\!1|)!}
\pFq{3}{2}{-\alpha\!-\!(m\!-\!1)\!\wedge\!n,\frac{|n-m+1|-p+\topt{0}{1}}{2}
}{1\!+\!|n\!-\!m\!+\!1|,\frac12\!-\!(m\!+\!1)\!\wedge\!n}{1}\nonumber\\
&\hspace{-1cm} -
\frac{
(m\!+\!\alpha)(m\!+\!n\!-\!p)
\Gamma(\alpha\!+\!1\!+\!m\!\vee\!(n\!-\!1))4^{(m\wedge(n-1))}(\tfrac12)_{m\wedge(n-1)}}
{|n\!-\!m-1|!(p\!-\!|n\!-\!m\!-\!1|)!}
\pFq{3}{2}{-\alpha\!-\!m\!\wedge\!(n\!-\!1),\frac{|n-m-1|-p+\topt{0}{1}}{2}
}{1\!+\!|n\!-\!m\!-\!1|,\frac12\!-\!m\!\wedge\!(n\!-\!1)}{1}\nonumber\\
&\hspace{-0.4cm} -
\frac{(m\!+\!1)
\Gamma(\alpha\!+\!1\!+\!(m\!+\!1)\!\vee\!n)2^{2((m+1)\wedge n)}(\tfrac12)_{(m+1)\wedge n}}
{(m\!+\!n\!-\!p\!+\!1)
|n\!-\!m-1|!(p\!-\!|n\!-\!m\!+\!1|)!}\pFq{3}{2}{-\alpha\!-\!(m\!+\!1)\!\wedge\!n,\frac{|n-m-1|-p+\topt{0}{1}}{2}
}{1\!+\!|n\!-\!m\!-\!1|,\frac12\!-\!(m\!+\!1)\!\wedge\!n}{1}\nonumber\\
&\hspace{-0.4cm}+
\frac{(n\!+\!1)
\Gamma(\alpha\!+\!1\!+\!m\!\vee\!(n\!+\!1))2^{2(m\wedge(n+1))}(\tfrac12)_{m\wedge(n+1)}}
{(m\!+\!n\!-\!p\!+\!1)
|n\!-\!m+1|!(p\!-\!|n\!-\!m\!+\!1|)!}\pFq{3}{2}{-\alpha\!-\!m\!\wedge\!(n\!+\!1),\frac{|n-m+1|-p+\topt{0}{1}}{2}
}{1\!+\!|n\!-\!m\!+\!1|,\frac12\!-\!m\!\wedge\!(n\!+\!1)}{1}\nonumber\\
&\hspace{-0.4cm} -
\frac{
2(m\!-\!n)
\Gamma(\alpha\!+\!1\!+\!m\!\vee\!n)2^{2(m\wedge n)}(\tfrac12)_{m\wedge n}}
{|n\!-\!m|!(p\!-\!|n-m|)!}\pFq{3}{2}{-\alpha\!-\!m\!\wedge\!n,\frac{|n-m|-p+\topt{0}{1}}{2}
}{1\!+\!|n\!-\!m|,\frac12\!-\!m\!\wedge\!n}{1}=0.
\end{align}
\normalsize
\end{thm}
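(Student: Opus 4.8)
The plan is to read ${\sf L}_{p,m,n}^\alpha$ as the integral ${\sf P}({\bf m};{\ba})$ of Theorem~\ref{THMAAA} for a product of $N+1=3$ orthogonal polynomials (so $N=2$), so that the theorem already delivers exactly $\binom{3}{2}=3$ contiguous relations, one for each unordered pair of quantum numbers drawn from $\{p,m,n\}$ (identified with the subscripts $1,2,3$).

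First I would specialize the five-term relation of Theorem~\ref{THMAAA} to Laguerre polynomials by inserting the recurrence coefficients \eqref{Laginfo}. Writing $n_j=a$ and $n_k=b$ for the two chosen quantum numbers, one computes
\[
B_{a}A_{b}-A_{a}B_{b}=\frac{2(b-a)}{(a+1)(b+1)},\qquad -A_{b}=\frac{1}{b+1},\qquad -C_{a}A_{b}=\frac{a+\alpha}{(a+1)(b+1)},
\]
and likewise for the remaining two coefficients, so that every term carries the common denominator $(a+1)(b+1)$. Clearing it collapses Theorem~\ref{THMAAA} to the clean relation
\[
2(b-a)\,{\sf L}+(a+1)\,{\sf L}^{+}_{j}+(a+\alpha)\,{\sf L}^{-}_{j}-(b+1)\,{\sf L}^{+}_{k}-(b+\alpha)\,{\sf L}^{-}_{k}=0,
\]
where ${\sf L}:={\sf L}_{p,m,n}^{\alpha}$ and ${\sf L}^{\pm}_{j}$ denotes the integral with $n_j$ replaced by $n_j\pm1$. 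Taking $(a,b)$ to be $(p,m)$, $(p,n)$ and $(m,n)$ in turn produces the three relations in ``integral form''.

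Next I would substitute the closed evaluation \eqref{3LasF} for each of the five integrals occurring in each relation. The remaining work is purely bookkeeping: the prefactor sign $(-1)^{p+m+n}$ flips in exactly the four terms in which a single one of $p,m,n$ is shifted by a unit, so it is a global factor and may be divided out; shifting one index by $\pm1$ moves one of the factorials $(m+n-p)!$, $|n-m|!$, $(p-|n-m|)!$ by one step, which I record as an explicit rational factor such as $m+n-p$ or $1/(m+n-p+1)$, and it relabels the $\Gamma$-, the $2$-power and the $(\tfrac12)_{\bullet}$-factors together with the arguments $m\vee n$, $m\wedge n$, $|n-m|$ of the ${}_3F_2$ in the evident way; and the hypothesis $n\ge m$ together with $|n-m|\le p\le m+n$ keeps all factorial arguments nonnegative, so that any term whose shifted index would force a negative factorial argument simply vanishes, which is consistent with the stated ranges (in particular the endpoints $p=|n-m|$ and $p=m+n$ kill the corresponding $p$-shifted terms). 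Collecting the resulting ${}_3F_2(1)$ terms reproduces the three displayed relations.

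The only genuine obstacle is clerical rather than conceptual: keeping the $\max/\min$ and absolute-value expressions, the sign $(-1)^{p+m+n}$, and the one-step factorial shifts perfectly consistent across all five terms of all three relations. One must in particular be careful at $m=n$, where shifting $m$ upward leaves $(m+1)\wedge n=n$ rather than $m+1$ — but this is exactly why both \eqref{3LasF} and the statement are phrased throughout in terms of $\vee$, $\wedge$ and $|\cdot|$, so that no case distinction is actually needed.
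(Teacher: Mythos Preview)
Your proposal is correct and follows essentially the same approach as the paper: apply Theorem~\ref{THMAAA} with the Laguerre recurrence coefficients \eqref{Laginfo} to obtain the three five-term contiguous relations among the integrals ${\sf L}$, ${\sf L}^\pm_1$, ${\sf L}^\pm_2$, ${\sf L}^\pm_3$, and then substitute the explicit ${}_3F_2(1)$ evaluation \eqref{3LasF} for each integral. Your intermediate relation $2(b-a)\,{\sf L}+(a+1)\,{\sf L}^{+}_{j}+(a+\alpha)\,{\sf L}^{-}_{j}-(b+1)\,{\sf L}^{+}_{k}-(b+\alpha)\,{\sf L}^{-}_{k}=0$ is exactly the paper's \eqref{Lcont}--\eqref{Lcont3} once specialized, and your description of the bookkeeping (the common sign $(-1)^{p+m+n}$, the one-step factorial shifts, the $\vee/\wedge/|\cdot|$ relabelling) is just a more explicit spelling-out of what the paper summarizes as ``substituting it into the above equations completes the proof.''
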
{}
\begin{proof}
\noindent Using Theorem \ref{THMAAA}, one has
 the following three contiguous relations:
\begin{eqnarray} \label{Lcont}
&&\hspace{-3cm}(p+1){\sf L}^{+}_1-(m+1){\sf L}_2^{+} = (m+\alpha){\sf L}_2^{-} -(p+\alpha){\sf L}_1^{-} +2(p-m){\sf L} ,\\
&&\hspace{-3cm}(p+1){\sf L}^{+}_1 -(n+1){\sf L}_3^{+} = (n+\alpha){\sf L}_3^{-} -(p+\alpha){\sf L}_1^{-} +2(p-n){\sf L} ,\label{Lcont2}\\
&&\hspace{-3cm}(m+1){\sf L}^{+}_2 -(n+1){\sf L}_3^{+} = (n+\alpha){\sf L}_3^{-} -(m+\alpha){\sf L}_2^{-} +2(m-n){\sf L} .\label{Lcont3}
\end{eqnarray}
Using the linearization of a product of two Laguerre polynomials
(expressed as an integral of a product of three Laguerre polynomials) written as a hypergeometric function ${}_3F_2(1)$ \eqref{3LasF} and substituting it into
the above equations completes the proof.
\end{proof}
\subsection{{Linearization of a product of two scaled Laguerre polynomials}}
Consider the integral associated with linearization coefficients for a product of two scaled 
Laguerre polynomials
\[
{\cal L}_{p,m,n}^\alpha(a,b):=
\int_0^\infty 
L_p^\alpha(x)
L_m^\alpha(ax)
L_n^\alpha(bx)
x^\alpha\expe^{-x}\,\dd x,
\]
where $\Re\alpha>-1$ and ${\mathcal L}_{p,n,n}^\alpha(a,b)=0$ if $p\ge n+m+1$.

\begin{rem}
To see that $\Re \alpha > -1$ is true, the above integral can be written out in terms of hypergeometric series to become
\begin{align*}
    {\cal L}_{p,m,n}^\alpha(a,b) &=  
     \frac{(\alpha +1)_p (\alpha +1)_m (\alpha +1)_n}{p!m!n!} \\
     & \times \sum_{s=0}^p \sum_{k=0}^m \sum_{l=0}^n \frac{ (-p)_s (-m)_k (-n)_l a^k b^l }{(\alpha+1)_s (\alpha+1)_k (\alpha+1)_l s!k!l!} \int_0^\infty x^{s+k+l+\alpha} \expe^{-x} \dd x.
\end{align*}
The integral then becomes $\Gamma(\alpha + 1 +s +k+l)$, where it is required that $\Re (\alpha+s+k+l) > -1$.  Since this must be true for all values that $s$, $k$, and $l$ take in the summation, this requirement takes the form $\Re \alpha > -1$.
\end{rem}

We now present a theorem which
describes double sum 
representations for this
integral.

\begin{thm}
\label{scaleL}
Let $p,m,n\in\N_0$, $a,b>0$, $\alpha\in\C\setminus-\N$. Then 
\begin{eqnarray}
&&\hspace{-0.25cm}{\cal L}_{p,m,n}^\alpha(a,b)=
\frac{
\Gamma(\alpha+1) (\alpha+1)_n (-\alpha-p)_m
}{
p!m!(n+m-p)!
}
\left(-\frac{a}{b}\right)^m b^p
\nonumber\\
&&\hspace{1.5cm}\times\sum_{k=0}^\infty\sum_{l=0}^{\infty}
\frac{(-m)_k(-\alpha-m)_k(p+1)_l(\alpha+1+p)_l (p-n-m)_{k+l}}
{(\alpha+p-m+1)_{k+l}(p+1)_{k+l-m}k!l!}
\left(-\frac{b}{a}\right)^k b^l
,
\label{sLA}
\end{eqnarray}
\end{thm}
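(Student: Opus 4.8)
The plan is to obtain a closed-form triple generating function for $\{{\cal L}_{p,m,n}^\alpha(a,b)\}$ and then read off the coefficient of $t^pu^mv^n$.

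First, multiply the Laguerre generating function \cite[(9.12.10)]{Koekoeketal} applied in turn to $L_p^\alpha(x)$, $L_m^\alpha(ax)$, $L_n^\alpha(bx)$, and integrate term by term against $x^\alpha\expe^{-x}$ over $(0,\infty)$; for $\Re\alpha>-1$ and $|t|,|u|,|v|$ small this interchange is justified by absolute convergence, along the lines of the derivation of \eqref{multlaggenfun}. Using $\int_0^\infty x^\alpha\expe^{-(1+W)x}\,\dd x=\Gamma(\alpha+1)(1+W)^{-\alpha-1}$ with $W=\tfrac{t}{1-t}+\tfrac{au}{1-u}+\tfrac{bv}{1-v}$ and clearing denominators, one gets
\[
\sum_{p,m,n\ge0}{\cal L}_{p,m,n}^\alpha(a,b)\,t^pu^mv^n=\Gamma(\alpha+1)F^{-\alpha-1},\qquad F=(1-u)(1-v)+(1-t)\bigl(au(1-v)+bv(1-u)\bigr).
\]

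Since $F$ is affine in $t$, write $F=P_0-tQ_0$ with $Q_0:=au(1-v)+bv(1-u)$ and $P_0:=(1-u)(1-v)+Q_0$; the binomial series gives $[t^p]F^{-\alpha-1}=\tfrac{(\alpha+1)_p}{p!}Q_0^pP_0^{-\alpha-1-p}$. The crucial algebraic step is that, with $w:=\tfrac{au}{1-u}+\tfrac{bv}{1-v}$, one has $Q_0=w(1-u)(1-v)$ and $P_0=(1-u)(1-v)(1+w)$, whence
\[
Q_0^pP_0^{-\alpha-1-p}=\bigl[(1-u)(1-v)\bigr]^{-\alpha-1}\,w^p(1+w)^{-\alpha-1-p}.
\]
Now expand $w^p(1+w)^{-\alpha-1-p}=\sum_{r\ge0}\tfrac{(-1)^r(\alpha+1+p)_r}{r!}w^{p+r}$, expand each $w^{p+r}$ by the binomial theorem in the two summands $\tfrac{au}{1-u}$, $\tfrac{bv}{1-v}$, and extract $[u^mv^n]$ termwise via $[u^j](1-u)^{-\gamma}=(\gamma)_j/j!$. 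This leaves a double sum over $r$ and the binomial index $i$.

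It remains to reindex and simplify. Putting $l:=r$ and $k:=m-i$, and rewriting the emerging factorials, binomials and shifted Pochhammer symbols via $(\alpha+1+m-k)_k=(-1)^k(-\alpha-m)_k$, $1/(m-k)!=(-1)^k(-m)_k/m!$, $\binom{p+l}{m-k}=(p+1)_l(-1)^k(-m)_k/\bigl(m!\,(p+1)_{k+l-m}\bigr)$, $1/(n+m-p-k-l)!=(-1)^{k+l}(p-n-m)_{k+l}/(n+m-p)!$ and $(\alpha+p-m+1+k+l)_{n+m-p-k-l}=(\alpha+p-m+1)_{n+m-p}/(\alpha+p-m+1)_{k+l}$, the monomials recombine as $a^{m-k}b^{p+l-m+k}=(a/b)^mb^p(b/a)^kb^l$, all the signs collect to $(-1)^m$ together with one $(-1)^k$ per $k$, and the prefactor is put in final form using $(\alpha+1)_p(\alpha+p-m+1)_{n+m-p}=(-1)^m(\alpha+1)_n(-\alpha-p)_m$. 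This reproduces exactly \eqref{sLA}; note that $(p+1)_{k+l-m}$ in the denominator kills (as a reciprocal) precisely the terms with $k+l\le m-p-1$, while $(-m)_k$ and $(p-n-m)_{k+l}$ kill those with $k>m$ or $k+l>n+m-p$, so the nominal range $k,l\ge0$ in \eqref{sLA} agrees with the genuine finite range produced by the coefficient extraction.

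The generating-function part is short; the real work is the last paragraph, and the only genuine hazard there is the sign bookkeeping together with the consistent conversion of $(m-k)!$, $(n+m-p-k-l)!$ and $(p+k+l-m)!$ into Pochhammer symbols of negative-integer argument (and the accompanying range check). Finally, the integral representation needs $\Re\alpha>-1$, but ${\cal L}_{p,m,n}^\alpha(a,b)/\Gamma(\alpha+1)$ is a polynomial in $\alpha$ and the apparent poles of the individual terms on the right of \eqref{sLA} cancel in the sum, so the identity extends to $\alpha\in\C\setminus(-\N)$ by analytic continuation.
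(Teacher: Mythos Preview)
Your argument is correct and genuinely different from the paper's. The paper expands each Laguerre polynomial by its defining ${}_1F_1$ series, integrates the resulting monomials to obtain a triple sum containing $\Gamma(\alpha+1+s+k+l)$, and then collapses the $s$-sum (coming from $L_p^\alpha$) by recognising it as a terminating ${}_2F_1(1)$ and applying Chu--Vandermonde; the remaining double sum is then brought to the stated form by the substitution $k\mapsto m-k$ and Pochhammer bookkeeping. Your route instead packages all $p,m,n$ at once via the triple generating function, exploits that $F$ is affine in $t$ to extract $[t^p]$ by a binomial expansion, and then uses the factorisation $Q_0=w(1-u)(1-v)$, $P_0=(1-u)(1-v)(1+w)$ to separate variables before reading off $[u^mv^n]$. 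The two methods are cousins---your binomial extraction of $[t^p]$ is the generating-function avatar of the paper's Chu--Vandermonde step---but the organising principles differ: the paper stays with finite sums throughout, whereas you work with a closed-form generating function and coefficient extraction. Your approach yields the generating function $\Gamma(\alpha+1)F^{-\alpha-1}$ as a free by-product and makes the structural reason for the simplification (the affine dependence on $t$ and the factorisation of $P_0,Q_0$) more visible; the paper's approach is more elementary in that it never leaves polynomial identities. Both require essentially the same Pochhammer and sign manipulations at the end, and your explicit remark on analytic continuation to $\alpha\in\C\setminus(-\N)$ fills a point the paper leaves tacit.
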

\begin{proof}
By inserting the definition of the Laguerre polynomials, this integral can
be converted to a triple sum, which can be evaluated using the integral
definition of the gamma function, namely
\begin{eqnarray*}
&&\hspace{-1.3cm}{\cal L}_{p,m,n}^\alpha(a,b)=
\frac{
(\alpha+1)_p
(\alpha+1)_m
(\alpha+1)_n
}{
p!m!n!
}\nonumber\\
&&\hspace{2cm}\times\sum_{s=0}^p
\sum_{k=0}^m
\sum_{l=0}^n
\frac{
(-p)_s
(-m)_k
(-n)_l
\Gamma(\alpha+1+s+k+l)
a^k
b^l
}{
(\alpha+1)_s
(\alpha+1)_k
(\alpha+1)_l\,
s!
k!
l!
}.
\end{eqnarray*}
By re-writing 
$\Gamma(\alpha+1+k+l+s)=\Gamma(\alpha+1)(\alpha+1)_{k+l}(\alpha+1+k+l)_{s}$,
one can write the sum over $s$ as a terminating Gauss hypergeometric series at unity.
The hypergeometric series can be evaluated using the Chu-Vandermonde identity 
\cite[(15.4.24)]{NIST:DLMF}. This converts the triple sum into a double sum which
produces
\begin{eqnarray}
&&\hspace{-2.0cm}{\cal L}_{p,m,n}^\alpha(a,b)=
\frac{\Gamma(\alpha+1+m)\Gamma(\alpha+1+n)}
{p!m!\,\Gamma(\alpha+1)}\nonumber\\
&&\hspace{0.5cm}\times\sum_{k=0}^m\sum_{l=0}^{n-p+k}
\frac{(-1)^{l+k}(-m)_k(p+1)_l(\alpha+1+p)_la^kb^{p+l-k}}
{(\alpha+1)_k(p+1)_{l-k}(\alpha+1+p)_{l-k}(n-p+k-l)!k!l!}.
\end{eqnarray}
In order to extend the sum indices to infinity, one can reverse
the order of the $k$ index by setting $k\mapsto m-k$.
Performing a series of standard manipulations for Pochhammer symbols produces the 
final form which completes the proof.
\end{proof}

\noindent An alternative form of the integral 
${\mathcal L}_{p,m,n}^\alpha(a,b)$ follows 
by utilizing the identity in Theorem \ref{scaleL}.
\begin{equation}
(p+1)_{r-m}=\frac{(p-m)!}{p!}(p-m+1)_r,
\label{critiden}
\end{equation}
which is valid for $p \ge m$.
\begin{cor}
\label{KDFL}
Let $p,m,n\in\N_0$, $p\ge m$, $a,b>0$, $\Re\alpha>-1$. Then 
\begin{equation}
{\cal L}_{p,m,n}^\alpha(a,b)
\label{sLA}
= 
\frac{ \Gamma(\alpha+1+n) (-\alpha-p)_m}{ m!  (p-m)! (n+m-p)!}
\left( -\frac{a}{b}\right)^m b^{\,p}
\,
Z_{p,m,n}^\alpha(a,b),
\end{equation}
where
\begin{eqnarray}
&&\hspace{-1cm}
Z_{p,m,n}^\alpha(a,b)
=
\sum_{l,k=0}^\infty
\frac{(p-n-m)_{k+l}
(-m,-\alpha-m)_k
(p+1,\alpha+1+p)_l}
{(p-m+1,\alpha+p-m+1)_{k+l}\,k!l!
}
{\left(-\frac{b}{a}\right)^k}
{b^l}\\
&&\hspace{0.98cm}=\KdF{2:0;0}{1:2;2}
{p-m+1,\alpha+p-m+1:-;-}
{p-n-m:-m,-\alpha-m;p+1,\alpha+p+1}{-\frac{b}{a},b},
\end{eqnarray}
where $F_{2:0;0}^{1:2;2}$ is terminating Kamp\'{e} de F\'{e}riet double hypergeometric series
\eqref{KdF}.  
\end{cor}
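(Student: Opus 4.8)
The plan is to start directly from the double-sum representation of ${\cal L}_{p,m,n}^\alpha(a,b)$ furnished by Theorem \ref{scaleL} and to transform its summand by means of the Pochhammer identity \eqref{critiden}. In the sum of Theorem \ref{scaleL} the only factor preventing immediate recognition as a Kamp\'e de F\'eriet series is the term $(p+1)_{k+l-m}$ in the denominator, whose Pochhammer index $k+l-m$ depends on $k+l$ but is shifted by $-m$. Applying \eqref{critiden} with $r=k+l$ --- which is legitimate precisely because $p\ge m$, so that $(p-m)!$ is defined --- replaces $(p+1)_{k+l-m}$ by $\tfrac{(p-m)!}{p!}(p-m+1)_{k+l}$; that is, it trades the shifted Pochhammer symbol for the unshifted $(p-m+1)_{k+l}$ at the cost of the $(k,l)$-independent constant $(p-m)!/p!$.

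I would then pull this constant out of the double sum and absorb it into the prefactor of Theorem \ref{scaleL}: the factor $\Gamma(\alpha+1)(\alpha+1)_n/(p!\,m!\,(n+m-p)!)$ becomes $\Gamma(\alpha+1+n)/(m!\,(p-m)!\,(n+m-p)!)$ after cancelling $p!$ and using $\Gamma(\alpha+1)(\alpha+1)_n=\Gamma(\alpha+1+n)$, while the monomial $(-a/b)^m b^{\,p}$ is untouched. What remains of the sum is exactly
\[
\sum_{l,k=0}^{\infty}
\frac{(p-n-m)_{k+l}\,(-m,-\alpha-m)_k\,(p+1,\alpha+1+p)_l}
{(p-m+1,\alpha+p-m+1)_{k+l}\,k!\,l!}
\left(-\frac{b}{a}\right)^{\!k} b^{\,l},
\]
which is the claimed $Z_{p,m,n}^\alpha(a,b)$, establishing the first displayed equality of the corollary.

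For the second equality I would read off the parameters against the definition \eqref{KdF}: with $r\leftrightarrow k$ and $s\leftrightarrow l$, the numerator Pochhammer symbol of index $r+s$ is $p-n-m$, those of index $r$ are $-m,-\alpha-m$, those of index $s$ are $p+1,\alpha+1+p$, the denominator symbols of index $r+s$ are $p-m+1,\alpha+p-m+1$, and there are no denominator symbols of index $r$ or of index $s$ alone, the two arguments being $-b/a$ and $b$. This is precisely $F_{2:0;0}^{1:2;2}$ with the parameters as stated. Since $p\le n+m$ is forced (otherwise ${\cal L}_{p,m,n}^\alpha(a,b)=0$), the factor $(p-n-m)_{k+l}$ vanishes for $k+l>n+m-p$, so the series terminates, as asserted.

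The only real obstacle is careful bookkeeping: one must check that the summand of Theorem \ref{scaleL} is transformed \emph{term by term} under \eqref{critiden}, with no hidden index dependence lost when $(p-m)!/p!$ is extracted, and one must verify that the hypotheses $p\ge m$ and $\Re\alpha>-1$ are exactly what is needed --- the former for \eqref{critiden}, and together they ensure $\Re(\alpha+p-m+1)>0$ so that the denominator symbol $(\alpha+p-m+1)_{k+l}$ never vanishes. Everything else is a routine identification of the resulting double sum with \eqref{KdF}.
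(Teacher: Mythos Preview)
Your proposal is correct and follows the same approach as the paper's own proof: start from Theorem~\ref{scaleL}, apply the Pochhammer identity~\eqref{critiden} with $r=k+l$ (the paper writes ``$r=l+l$'', an evident typo) to rewrite $(p+1)_{k+l-m}$, absorb the resulting constant into the prefactor, and then identify the remaining double sum with the Kamp\'e de F\'eriet series via~\eqref{KdF}. Your account is in fact more detailed than the paper's, which compresses all of this into a single sentence.
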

\begin{proof}
The terminating double hypergeometric Kamp\'{e} de F\'{e}riet form
\eqref{KdF}
of the integral ${\cal L}_{p,m,n}^\alpha(a,b)$
in Corollary \ref{KDFL}
follows from Theorem \ref{scaleL} by utilizing
the identity \eqref{critiden} 
($p \ge m$) with $r=l+l$ and \eqref{KdF}.
This completes the proof.
\end{proof}

\begin{thm}
\label{scaledLag}
Let $p,m,n \in \mathbb{N}_0$, $a,b, > 0$, $\alpha > -1$. Then the contiguous relations for the integral of the product of three Laguerre polynomials, which two are scaled is the following:
\begin{eqnarray}
&&\hspace{-0.4cm}a(p\!+\!\alpha){\cal L}_1^{-}\!-\!(m+\alpha){\cal L}_2^{-} \!+\!(2m\!+\!\alpha+1){\cal L}\!-\!a(2p+\alpha+1){\cal L}\!-\!(m+1){\cal L}_2^{+}\!+\!a(p+1){\cal L}_1^{+}\!=\!0,\\
&&\hspace{-0.4cm}b(p\!+\!\alpha){\cal L}_1^{-}\!-\!(n+\alpha){\cal L}_3^{-} \!+\!(2n\!+\!\alpha+1){\cal L}\!-\!b(2p+\alpha+1){\cal L}\!-\!(n+1){\cal L}_3^{+}\!+\!b(p+1){\cal L}_1^{+}\!=\!0,\\
&&\hspace{-0.4cm}b(m\!+\!\alpha){\cal L}_2^{-}\!-\!a(n+\alpha){\cal L}_3^{-}\!+\!a(2n\!+\!\alpha\!+\!1){\cal L}\!-\!b(2m\!+\!\alpha\!+\!1){\cal L}\!-\!a(n\!+\!1){\cal L}_3^{+}\!+\!b(m\!+\!1){\cal L}_2^{+}\!=\!0,
\end{eqnarray}
where
\begin{align}
{\cal L} & := {\cal L}(p,m,n;\alpha;a,b), \\
{\cal L}_1^\pm & := {\cal L}(p\pm1,m,n;\alpha;a,b),\\
{\cal L}_2^\pm & := {\cal L}(p,m\pm1,n;\alpha;a,b),\\
{\cal L}_3^\pm & := {\cal L}(p,m,n\pm1;\alpha;a,b).
\end{align}
\end{thm}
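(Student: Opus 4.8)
The plan is to mimic the argument of Theorem~\ref{THMAAA}, the only new feature being that the three-term recurrence (\ref{recur}) with the Laguerre coefficients (\ref{Laginfo}) must be applied in the dilated argument. Substituting $x\mapsto ax$ in the Laguerre recurrence $(m+1)L_{m+1}^\alpha(x)=(2m+\alpha+1-x)L_m^\alpha(x)-(m+\alpha)L_{m-1}^\alpha(x)$ gives
\[
xL_m^\alpha(ax)=\frac1a\Bigl((2m+\alpha+1)L_m^\alpha(ax)-(m+1)L_{m+1}^\alpha(ax)-(m+\alpha)L_{m-1}^\alpha(ax)\Bigr),
\]
and the analogous identity holds for $xL_n^\alpha(bx)$ with $a$ replaced by $b$. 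By the Remark preceding Theorem~\ref{scaleL}, the hypothesis $\alpha>-1$ ensures that ${\cal L}$ and each of ${\cal L}_1^\pm,{\cal L}_2^\pm,{\cal L}_3^\pm$ converge (with the usual convention $L_{-1}^\alpha\equiv0$ covering the cases where an index would become negative), so the manipulations below are legitimate.

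For the first relation, I would start from ${\cal L}_1^+={\cal L}(p+1,m,n;\alpha;a,b)$ and use the unscaled recurrence to replace $L_{p+1}^\alpha(x)$ in the integrand by $(2p+\alpha+1-x)L_p^\alpha(x)-(p+\alpha)L_{p-1}^\alpha(x)$, which yields $(p+1){\cal L}_1^+=(2p+\alpha+1){\cal L}-(p+\alpha){\cal L}_1^- -\int_0^\infty xL_p^\alpha(x)L_m^\alpha(ax)L_n^\alpha(bx)x^\alpha\expe^{-x}\,\dd x$. In the remaining integral I then replace $xL_m^\alpha(ax)$ by the displayed scaled recurrence, which turns it into $\frac1a\bigl((2m+\alpha+1){\cal L}-(m+1){\cal L}_2^+-(m+\alpha){\cal L}_2^-\bigr)$; multiplying the resulting identity through by $a$ and collecting terms produces the first stated relation. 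The second relation follows in exactly the same way, except that one replaces $xL_n^\alpha(bx)$ rather than $xL_m^\alpha(ax)$ and multiplies through by $b$.

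For the third relation, I would instead start from ${\cal L}_2^+={\cal L}(p,m+1,n;\alpha;a,b)$ and apply the \emph{scaled} recurrence to $L_{m+1}^\alpha(ax)$, writing $(m+1)L_{m+1}^\alpha(ax)=(2m+\alpha+1-ax)L_m^\alpha(ax)-(m+\alpha)L_{m-1}^\alpha(ax)$; this leaves a residual term $-a\int_0^\infty xL_p^\alpha(x)L_m^\alpha(ax)L_n^\alpha(bx)x^\alpha\expe^{-x}\,\dd x$. Replacing $xL_n^\alpha(bx)$ there by its scaled recurrence (with parameter $b$) converts the residual into $-\frac{a}{b}\bigl((2n+\alpha+1){\cal L}-(n+1){\cal L}_3^+-(n+\alpha){\cal L}_3^-\bigr)$; multiplying by $b$ and rearranging gives the third stated relation.

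The hard part here is purely bookkeeping rather than conceptual: the factors $1/a$, $1/b$ and, in the third relation, the cross ratio $a/b$ must be cleared consistently, and the signs inherited from $A_n=-1/(n+1)$ tracked carefully. (One could in principle substitute the explicit double-sum form of ${\cal L}_{p,m,n}^\alpha(a,b)$ from Theorem~\ref{scaleL} and verify the identities directly, but the recurrence route is shorter and does not require $p\ge m$.) As a consistency check, setting $a=b=1$ collapses the three identities to the Laguerre instance \eqref{Lcont}--\eqref{Lcont3} of Theorem~\ref{THMAAA}.
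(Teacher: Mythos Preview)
Your proposal is correct and follows essentially the same approach as the paper: the paper's proof simply observes that the scaled polynomial $L_n^\alpha(ax)$ satisfies the three-term recurrence with $A_n$ replaced by $-a/(n+1)$ (and $B_n,C_n$ unchanged) and then invokes Theorem~\ref{THMAAA}, which is precisely the computation you spell out in detail. Your explicit bookkeeping of the $1/a$, $1/b$, $a/b$ factors and the consistency check at $a=b=1$ are welcome additions to the paper's rather terse argument.
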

\begin{proof}
Again Theorem \ref{THMAAA} will be used with the coefficients of the three-term recurrence relations of Laguerre polynomials.  However, because we are now working with scaled Laguerre polynomials, one of these coefficients is now modified to the following:
\begin{equation}
A_n = - \frac{a}{n+1}.
\end{equation}
Where $a$ is the scaling factor of the Laguerre polynomial $L_n^\alpha(a x)$. The other two coefficients seen in \eqref{Laginfo} are unchanged.  This can be found by inserting the scaled Laguerre polynomial into the three-term recurrence relation and solving for the coefficients.\\
\end{proof}

\subsection*{Acknowledgements}
We would like to thank Roberto S.~Costas-Santos for valuable 
discussions.


\begin{thebibliography}{1}

\bibitem{ReportOpenOPSFA13}
R.~A. {Askey}, R.~K. {Beatson}, T.~S. {Chihara}, H.~S. {Cohl}, C.~F. {Dunkl},
  C.~{Koutschan}, S.~{Olver}, Y.~{Xu}, W.~{zu Castell}, and W.~{Zudilin}.
\newblock {Report from the Open Problems session at OPSFA13}.
\newblock {\em Symmetry, Integrability and Geometry: Methods and Applications
  (SIGMA), Special Issue on Orthogonal Polynomials, Special Functions and
  Applications}, 12:Paper 071, 12, 2016.

\bibitem{NIST:DLMF}
{\it NIST Digital Library of Mathematical Functions}.
\newblock \href{https://dlmf.nist.gov/}{\bf\tt\normalsize
  https://dlmf.nist.gov/}, Release 1.1.5 of 2022-03-15.
\newblock F.~W.~J. Olver, A.~B. {Olde Daalhuis}, D.~W. Lozier, B.~I. Schneider,
  R.~F. Boisvert, C.~W. Clark, B.~R. Miller, B.~V. Saunders, H.~S. Cohl, and
  M.~A. McClain, eds.

\bibitem{Ismail:2009:CQO}
M.~E.~H. Ismail.
\newblock {\em Classical and Quantum Orthogonal Polynomials in One Variable},
  volume~98 of {\em Encyclopedia of Mathematics and its Applications}.
\newblock Cambridge University Press, Cambridge, 2009.
\newblock With two chapters by Walter Van Assche, With a foreword by Richard A.
  Askey, Corrected reprint of the 2005 original.

\bibitem{IsmailKasraouiZeng2013}
M.~E.~H. Ismail, A.~Kasraoui, and J.~Zeng.
\newblock Separation of variables and combinatorics of linearization
  coefficients of orthogonal polynomials.
\newblock {\em Journal of Combinatorial Theory, Series A}, 120(3):561--599,
  2013.

\bibitem{Janssenetal}
A.~J. E.~M. Janssen, J.~J.~M. Braat, and P.~Dirksen.
\newblock {On the Computation of the Nijboer-Zernike Aberration Integrals at
  Arbitrary Defocus}.
\newblock {\em Journal of Modern Optics}, 51(5):687--703, 2004.

\bibitem{Koekoeketal}
R.~Koekoek, P.~A. Lesky, and R.~F. Swarttouw.
\newblock {\em Hypergeometric orthogonal polynomials and their
  {$q$}-analogues}.
\newblock Springer Monographs in Mathematics. Springer-Verlag, Berlin, 2010.
\newblock With a foreword by Tom H. Koornwinder.

\bibitem{Rahman81a}
M.~Rahman.
\newblock A nonnegative representation of the linearization coefficients of the
  product of {J}acobi polynomials.
\newblock {\em Canadian Journal of Mathematics. Journal Canadien de
  Math\'ematiques}, 33(4):915--928, 1981.

\bibitem{SriKarl}
H.~M. Srivastava and Per~W. Karlsson.
\newblock {\em Multiple {G}aussian hypergeometric series}.
\newblock Ellis Horwood Series: Mathematics and its Applications. Ellis Horwood
  Ltd., Chichester, 1985.

\end{thebibliography}

\def\cprime{$'$} \def\dbar{\leavevmode\hbox to 0pt{\hskip.2ex \accent"16\hss}d}

\end{document}